\documentclass[11pt,a4paper]{amsart} 
\usepackage[foot]{amsaddr} 
\usepackage{amsmath} 
\usepackage{amsthm} 
\usepackage{amssymb} 
\usepackage{mathrsfs} 
\usepackage{verbatim} 
\usepackage{aliascnt}
\usepackage{xcolor} \usepackage[colorlinks=true,linkcolor=blue,citecolor=red]{hyperref} 
\usepackage{enumitem} 
\usepackage{multicol} 
\usepackage{graphicx} 
\usepackage{float} 
\usepackage[utf8]{inputenc} 
\usepackage[nameinlink]{cleveref} 
\usepackage{esint} 
\usepackage{ulem}

\newcommand{\barint}{
\rule[.036in]{.12in}{.009in}\kern-.16in \displaystyle\int }

\theoremstyle{plain} 
\newtheorem{theorem}{Theorem} 
\newtheorem*{theorem*}{Theorem} 
\newtheorem{lemma}[theorem]{Lemma} 
 
\newtheorem{corollary}[theorem]{Corollary} 
\newtheorem{remark}[theorem]{Remark}

\theoremstyle{definition} \newtheorem{definition}[theorem]{Definition}      

\newcommand{\Om}{\Omega}

 \newcommand{\diam}{\text{diam}} 
 \numberwithin{equation}{section} \numberwithin{theorem}{section}

\def\diam{\operatorname{diam}}

\def\max{{\rm max\,}}

\def\r{\right}
\def\lf{\left}

\def\eqn#1$$#2$${\begin{equation}\label#1#2\end{equation}} 

\allowdisplaybreaks  

\title[On the measure of the boundary of a Haj\l{}asz--Sobolev $(M^{1,p},M^{1,q})$-extension domain]{On the measure of the boundary of a Haj\l{}asz--Sobolev $(M^{1,p},M^{1,q})$-extension domain} 

\author[R. Alvarado]{$^1$Ryan Alvarado} 
	\email{$^1$rjalvarado@amherst.edu}
	\address{$^1$Department of Mathematics, Amherst College, MA 01002, USA}
\author[R. Mishra]{$^2$Riddhi Mishra}
\email{$^2$riddhi.r.mishra@jyu.fi}
\address{$^2$Department of Mathematics and Statistics, University of Jyv\"askyl\"a,  P.O. Box 35, FI-40014, Jyv\"askyl\"a, Finland}

\thanks{The second named author has been supported by the Academy
of Finland via Centre of Excellence in Analysis and Dynamics Research
(Project number 323960).} 

\dedicatory{Dedicated to Piotr Haj\l{}asz on the occasion of his 60th birthday.}

\keywords{Sobolev spaces, Haj\l{}asz--Sobolev spaces, Sobolev extension domain, metric measure spaces, measure density condition} 
\subjclass[2020]{Primary 46E36; Secondary 30L99}

\begin{document} 
\begin{abstract} 
We prove that the boundary of every bounded $(M^{1,p},M^{1,q})$-extension domain in a $Q$-doubling metric measure space has measure zero whenever $0<q<p<\infty$, with the additional restriction that $p<qQ/(Q-q)$ if $q<Q$.
\end{abstract}

\maketitle

\section{Introduction}
Let $\Omega\subset \mathbb{R}^n$ be a domain (namely, a nonempty, open, and connected set) and let
$1\leq q\leq p<\infty$. 
Recall that $\Omega$ is called a $(W^{1,p},W^{1,q})$-extension domain if every function in the classical Sobolev space $W^{1,p}(\Omega)$ admits an extension belonging to $W^{1,q}(\mathbb{R}^n)$ with norm controlled by the original Sobolev norm. The theory of Sobolev extension domains has been studied extensively and has revealed a close relationship between the existence of extension operators and the geometry of the underlying domain. For example, Haj{\l}asz, Koskela, and Tuominen in \cite{HKT} proved that  every $(W^{1, p}, W^{1, p})$-extension domain
satisfies the following lower $n$-Ahlfors regularity condition:
\begin{equation}\label{eq:ahlfors}
|B(x, r)\cap\Omega|\geq C r^n,
\end{equation}
for every $x\in\overline\Omega$ and  $0<r\leq1$. 
In particular, the boundary of every such domain has  measure zero. The lower $n$-Ahlfors regularity  condition \eqref{eq:ahlfors}, however, is quite restrictive and excludes many domains, including outward cuspidal domains.

The situation is quite different when $q<p$. In particular, the lower measure bound \eqref{eq:ahlfors} need not hold in this case. For instance, it fails at the tips of certain outward cuspidal domains that nevertheless have the $(W^{1,p},W^{1,q})$-extension property for suitable $q<p$; see, for example, \cite{GS:1982,Mazya2,SKV}. Although weaker polynomial lower bounds for the measure of $B(x, r)\cap\Omega$ are known in this case for various ranges of $p$ and $q$, such bounds do not by themselves guarantee that the boundary has measure zero.


The size of the boundary of $(W^{1,p},W^{1,q})$-extension domains has been studied further, for example, in \cite{PUZ,2,1}. In particular, Koskela, Ukhlov, and Zhu \cite{PUZ} constructed a bounded $(W^{1,p},W^{1,q})$-extension domain whose boundary has positive  measure when $1\leq q<n-1$ and $p$ is sufficiently large. However, for  $q>n-1$, they showed that the boundary of every bounded $(W^{1,p},W^{1,q})$-extension domain necessarily has measure zero. More recently, Koskela and Mishra \cite{KM} extended this line of work by proving that every bounded extension domain of this type has a boundary of measure zero whenever $1\leq q<p<\infty$, provided that $p<qn/(n-q)$ when $q<n$.


The purpose of the present paper is to investigate the corresponding problem for the Haj{\l}asz--Sobolev space $M^{1,p}$ in the more general setting of metric measure spaces. Introduced by Haj{\l}asz in \cite{Hajlasz96}, these Sobolev spaces are known to coincide with the classical Sobolev spaces $W^{1,p}$ on $\mathbb{R}^n$ when $1<p<\infty$. Their definition, however, is given by means of a pointwise inequality and therefore remains meaningful in settings where differentiable structures are unavailable. As a result, Haj{\l}asz--Sobolev spaces have played a fundamental role in analysis on metric spaces, particularly in nonlinear potential theory and geometric analysis.  We refer the reader to Section~\ref{preli} for precise definitions.

Let $(X,d,\mu)$ be a metric measure space. Given $0<q\leq p<\infty$, we say that a nonempty measurable set
$\Omega\subset X$ has the $(M^{1,p},M^{1,q})$-\textit{extension property} if every function in $M^{1,p}(\Omega)$ admits an extension belonging to $M^{1,q}(X)$, with norm controlled by the original norm. When $\Omega$ is also open and connected, we will refer to it as an $(M^{1,p},M^{1,q})$-\textit{extension domain}.


For $p=q$, measure bounds and extension problems for Haj{\l}asz--Sobolev spaces are already well understood. Haj{\l}asz, Koskela, and Tuominen \cite[Theorem~5]{HKTR} proved the analogue of \eqref{eq:ahlfors} for $(M^{1,p},M^{1,p})$-extension domains in $Q$-Ahlfors regular geodesic metric measure spaces. This was later extended to doubling metric measure spaces in \cite[Corollary~1.3]{AYY22}, with $r^n$ in \eqref{eq:ahlfors} replaced by the measure of $B(x,r)$.

The case $q<p$, which to the best of our knowledge has not been previously studied for Haj\l{}asz--Sobolev extension domains, even in $\mathbb{R}^n$, is the one we consider here. Since every  $(W^{1,p},W^{1,q})$-extension domain with $1<q\leq p<\infty$ is automatically an $(M^{1,p},M^{1,q})$-extension domain, the result of \cite{KM} provides a natural Euclidean point of comparison. However, in metric measure spaces, one cannot rely on the structure of $\mathbb{R}^n$, and many of the geometric arguments available in the classical setting cease to apply. Therefore, it is not immediate  whether the same boundary-measure conclusion in \cite{KM} should continue to hold for Haj{\l}asz--Sobolev extension domains in the absence of the Euclidean structure.
Our first main result shows that this is indeed the case. 
\begin{theorem}\label{thmE.6}
Let $(X,d,\mu)$ be a $Q$-doubling  metric measure space for some $Q>0$ and suppose that $\Omega \subset X$ is a bounded $(M^{1,p}, M^{1,q})$-extension domain with $0<q<p < q^*_Q$, where $q^*_Q:=\frac{qQ}{Q-q}$ if $q<Q$ and $q^*_Q:=\infty$, otherwise. Then $\mu(\partial \Omega)=0$. 
\end{theorem}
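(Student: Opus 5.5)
We argue by contradiction, following the strategy of Koskela--Mishra, with the Euclidean ingredients replaced by tools available for Haj\l{}asz--Sobolev functions on doubling spaces. Suppose $\mu(\partial\Omega)>0$.

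\textbf{Step 1: density points.} By the Lebesgue differentiation theorem, which holds in doubling spaces, $\mu$-almost every point of $\partial\Omega$ is a density point of $\partial\Omega$. Hence for $\mu$-a.e.\ $x\in\partial\Omega$ we have $\mu(B(x,r)\cap\Omega)/\mu(B(x,r))\to 0$ as $r\to0$. Fix such a point $x_0$. For each small $r$, the set $\Omega\cap B(x_0,r)$ then occupies a vanishing proportion of the ball, while $\Omega\setminus B(x_0,r)$ still has a definite amount of mass, because $\Omega$ is open, bounded and nonempty.

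\textbf{Step 2: test functions.} For a small radius $r$, let $u_r$ be the Lipschitz cutoff that equals $1$ on $B(x_0,r)$, vanishes outside $B(x_0,2r)$, and has $|\nabla u_r|\le 1/r$. Restrict it to $\Omega$. The function $g=r^{-1}\chi_{B(x_0,2r)\cap\Omega}$ (times a constant) is a Haj\l{}asz gradient on $\Omega$, since pointwise Lipschitz-type inequalities between points of $\Omega$ need only the global Lipschitz constant. This gives
\[
\|u_r\|_{M^{1,p}(\Omega)}^p\lesssim (1+r^{-p})\,\mu(B(x_0,2r)\cap\Omega).
\]
By the extension property there is $Eu_r\in M^{1,q}(X)$ with a Haj\l{}asz gradient $h$ satisfying
\[
\|Eu_r\|_{M^{1,q}(X)}\lesssim r^{-1}\mu(B(x_0,2r)\cap\Omega)^{1/p}.
\]
The key point is that, by density, the boundary set $\partial\Omega\cap B(x_0,r)$ has almost full measure. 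Moreover $Eu_r$ equals $1$ on $\Omega\cap B(x_0,r)$ and $0$ on $\Omega\setminus B(x_0,2r)$, so $Eu_r$ must transition somewhere.

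\textbf{Step 3: lower bound via Sobolev--Poincar\'e.} For $M^{1,q}$ functions on a $Q$-doubling space, the Haj\l{}asz Sobolev--Poincar\'e inequality holds with exponent $q^*_Q$, and it holds for all $q>0$ (the pointwise definition gives a $(q^*,q)$-Poincar\'e inequality, after Haj\l{}asz and Koskela). Apply it on the ball $B=B(x_0,Cr)$, with $C$ large enough that $B$ contains a portion of $\Omega$ away from $x_0$ where $Eu_r=0$. The difficulty is that $\mu(\Omega\cap B(x_0,r))$ is tiny, so the oscillation is supported on a small set. We therefore combine the lower bound with measure-density estimates. First, $\Omega\cap B(x_0,r)$ is nonempty and open, so it has positive measure. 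Choosing $r$ along a sequence and running a Maz'ya-type truncation/chaining argument over dyadic annuli $B(x_0,2^{-k})$ should give
\[
\mu(\Omega\cap B(x_0,r))^{1/q^*}\lesssim r\,\mu(B)^{1/q^*-1/q}\,\|h\|_{L^q(B)}.
\]
Combining this with Step 2, and noting that $Q$-doubling gives $\mu(B)\gtrsim (r/R)^Q$, we get
\[
\mu(\Omega\cap B(x_0,r))^{1/q^*_Q-1/p}\lesssim \bigl(\mu(B(x_0,r))\,r^{-Q}\bigr)^{\ldots}.
\]
Since $p<q^*_Q$, the left side has a positive exponent. This forces $\mu(\Omega\cap B(x_0,r))\gtrsim \mu(B(x_0,r))^{\theta}$ for some $\theta$. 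Iterating this over annuli (the standard Haj\l{}asz--Koskela--Tuominen iteration adapted to $q<p$) should upgrade it to a bound of the form $\mu(\Omega\cap B(x_0,r))\ge c\,\mu(B(x_0,r))$ at $\mu$-a.e.\ boundary point. That contradicts the zero density from Step 1.

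\textbf{Main obstacle.} The hardest step is the iteration in Step 3: turning the extension inequality into a genuine density lower bound when $q<p$, because the exponents do not close up as they do in the case $p=q$. I would first try to use the extension inequality on the boundary part directly. Since $Eu_r\in M^{1,q}(X)$, it has Lebesgue points $\mu$-a.e., and $Eu_r$ must be near $1$ on most of $\partial\Omega\cap B(x_0,r)$ in an averaged sense; otherwise the gradient bound is violated. This lets us use the full measure $\mu(B(x_0,r))$ in the Poincar\'e lower bound rather than $\mu(\Omega\cap B)$. That yields
\[
\mu(B(x_0,r))^{1/q}\lesssim \mu(\Omega\cap B(x_0,2r))^{1/p},
\]
which together with $q<p$, $p<q^*_Q$ and doubling contradicts zero density.
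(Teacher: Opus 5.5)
\textbf{Where the argument breaks.} Your Steps~1--2 (density points, Lipschitz cutoffs, extension) agree with the paper. Step~3 and the fallback in your last paragraph do not close, for a structural reason. At $x_0$ you use only two things: zero density of $\Omega$, and the \emph{global} bound $\|Eu_r\|_{M^{1,q}(X)}\lesssim r^{-1}\mu(B(x_0,2r)\cap\Omega)^{1/p}$. For $q<p$ this bound is not scale invariant.

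Write $B=B(x_0,r)$ and $\delta=\mu(B\cap\Omega)/\mu(B)$, and work with $q<Q$, to which one reduces. Even granting $\mu(B(x_0,2r)\cap\Omega)\approx\mu(B\cap\Omega)$, the right-hand side of Sobolev--Poincar\'e at scale $r$ is, up to constants,
\[
\frac{r}{\mu(B)^{1/Q}}\cdot\frac{\mu(B\cap\Omega)^{1/p}}{r}
=\delta^{\frac1p-\frac1{q^*_Q}}\,\mu(B)^{\frac1p-\frac1q}\,\mu(B\cap\Omega)^{\frac1{q^*_Q}} .
\]
The factor $\mu(B)^{1/p-1/q}\to\infty$ swamps the gain $\delta^{1/p-1/q^*_Q}\to 0$. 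This is exactly why your exponents ``do not close up''.

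What you can extract is only a polynomial bound $\mu(B\cap\Omega)\gtrsim\mu(B)^{\beta}$ with $\beta=\frac{1/Q}{1/p-1/q^*_Q}>1$. This is of the same type as Lemma~\ref{lem: lower measure bound}, and it is compatible with zero density. No iteration using only these inputs can upgrade it to $\mu(B\cap\Omega)\gtrsim\mu(B)$. Indeed, the tip of an outward cusp of a $(W^{1,p},W^{1,q})$-extension domain (hence an $(M^{1,p},M^{1,q})$-extension domain when $q>1$, for suitable exponents in the theorem's range) has zero density and satisfies the same global bound, yet violates that conclusion.

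Your fallback fails as well. The claim that $Eu_r\approx 1$ on most of $\partial\Omega\cap B(x_0,r)$ is not justified: $u_r$ prescribes nothing off $\Omega$, and $E$ is an arbitrary bounded extension. Even granting it, $\mu(B(x_0,r))^{1/q}\lesssim\mu(\Omega\cap B(x_0,2r))^{1/p}$ only gives density $\gtrsim\mu(B(x_0,r))^{p/q-1}\to 0$. Since $p>q$, this is no contradiction.

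\textbf{What the paper uses instead.} The missing idea is an \emph{a.e.\ pointwise localization} of the extension that restores exactly the factor $\mu(B)^{1/q-1/p}$.
\begin{enumerate}
\item[(1)] The paper defines $\Phi(U)$ as the $k$-th power ($1/k=1/q-1/p$) of the best ratio between the $q$-Dirichlet energy of extensions on $U$ and the $M^{1,p}_0(U,\Omega)$-norm.
\item[(2)] It proves $\Phi$ is bounded and quasiadditive (Theorem~\ref{thmE.3}), and by a $5B$-covering argument that $\overline{D\Phi}<\infty$ a.e.\ (Lemma~\ref{lemmafun}).
\item[(3)] At such points, every $u\in M^{1,p}_0(B(x,r),\Omega)$ has a local extension with $\|g\|_{L^q(B(x,r))}\le C_x\mu(B(x,r))^{1/q-1/p}\|u\|_{M^{1,p}_0(B(x,r),\Omega)}$ (Corollary~\ref{corollary}). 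This is precisely the input that cusp tips lack.
\end{enumerate}

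Even with this, Sobolev--Poincar\'e only controls $\min\{\mu(A_{r,Lr,\Omega}),\mu(B(x,r/2)\cap\Omega)\}$ (Lemma~\ref{lemma1}). Your Step~3 silently assumes that the set where $Eu_r=0$ is not the smaller piece. Excluding the annulus alternative requires local reverse doubling of $\mu$ at $x$ (Lemma~\ref{upper}). That is where connectedness of $\Omega$ enters (Lemma~\ref{boundary-reverse-doubling}), and your proposal never uses connectedness.

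Finally, the contradiction is not with zero density directly. The paper obtains the decay $\mu(B(x,r/2)\cap\Omega)\le c\,\mu(B(x,r)\cap\Omega)$ with $c<2^{-s}$, which is contradicted by the polynomial lower bound $\mu(B(x,r)\cap\Omega)\ge\theta r^s$. So the polynomial bound your Step~3 can legitimately produce is one half of the contradiction, not the contradiction itself.
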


Theorem~\ref{thmE.6} is new even in $\mathbb{R}^n$ equipped with the Lebesgue measure. Moreover, in this setting, one may take $Q=n$ and, for $q>1$, the  boundary-measure result in \cite{KM} follows as a special case of Theorem~\ref{thmE.6}.  

The connectivity assumption in Theorem~\ref{thmE.6} enters the proof in a specific way. For a doubling measure, connectivity of $\Omega$ guarantees a local reverse-doubling condition at every boundary point; see Lemma~\ref{boundary-reverse-doubling}. If such a condition is instead supplied by the ambient measure, connectivity is no longer needed. This leads to our second main result.

\begin{theorem}\label{thmE.6-notconnected}
Let $(X,d,\mu)$ be a metric measure space, where $\mu$ is $Q$-Ahlfors regular for some $Q>0$, and suppose that $\Omega \subset X$ is a bounded nonempty open  set having the $(M^{1,p}, M^{1,q})$-extension property with $0<q<p < q^*_Q$, where $q^*_Q:=\frac{qQ}{Q-q}$ if $q<Q$ and $q^*_Q:=\infty$, otherwise. Then $\mu(\partial \Omega)=0$. 
\end{theorem}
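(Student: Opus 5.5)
We would prove Theorem~\ref{thmE.6-notconnected} by contradiction, following the strategy behind Theorem~\ref{thmE.6}. The only change is that the local reverse-doubling property at boundary points, which Lemma~\ref{boundary-reverse-doubling} derives from connectivity, is now supplied directly by Ahlfors regularity. Indeed, if $\mu(B(x,r))\simeq r^Q$ for all $x\in X$ and $0<r\le \diam X$, then for $x\in\partial\Omega$ and $0<r\lesssim\diam\Omega$ the annulus $B(x,r)\setminus B(x,\delta r)$ has measure comparable to $\mu(B(x,r))$ once $\delta$ is a small fixed constant. More importantly, every scale behaves polynomially in $r$, which is what the counting arguments require.

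Assume $\mu(\partial\Omega)>0$. Then by the Lebesgue differentiation theorem (valid since $\mu$ is doubling) there is a density point $x_0\in\partial\Omega$ of $\partial\Omega$. At such a point, for small $r$ we have $\mu(B(x_0,r)\cap\partial\Omega)\ge(1-\varepsilon)\mu(B(x_0,r))$, hence $\mu(B(x_0,r)\cap\Omega)\le\varepsilon\mu(B(x_0,r))$. We would then run a Whitney-type covering argument inside $B(x_0,r)$:
\begin{enumerate}
\item Since $\partial\Omega$ is dense in this ball, while $\Omega$ is open and $\Omega\cap B(x_0,r)\neq\emptyset$, the set $\Omega\cap B(x_0,r)$ can be covered by many Whitney-type balls $B_i=B(x_i,r_i)\subset\Omega$ with $r_i\simeq \operatorname{dist}(x_i,\partial\Omega)$.
\item On each $B_i$ we take the test function $u_i$, a Lipschitz bump equal to $1$ on $B(x_i,r_i/2)$ and supported in $B_i$. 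Its $M^{1,p}(\Omega)$ norm is bounded by $r_i^{-1}\mu(B_i)^{1/p}$, using the Haj\l{}asz gradient $g=r_i^{-1}\chi_{B_i}$ (on $\Omega$ the bump is even globally $1/r_i$-Lipschitz).
\item Let $Eu_i\in M^{1,q}(X)$ be the extension. Because $Eu_i$ must pass from the value $1$ on $B(x_i,r_i/2)$ to values whose average near points of $\partial\Omega$ in a comparable ball is small, the Haj\l{}asz--Sobolev Poincaré/Sobolev embedding on balls (in its $q^*_Q$-form for $q<Q$, available by Ahlfors regularity) forces $\|Eu_i\|_{M^{1,q}}\gtrsim r_i^{-1}\mu(B(x_i,Cr_i))^{1/q}$ with a polynomial gain.
\item Combining, $r_i^{-1} r_i^{Q/q}\lesssim \|E\|\, r_i^{-1} r_i^{Q/p}$, so $r_i^{Q(1/q-1/p)}\lesssim\|E\|$. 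This alone is consistent for small $r_i$, so it does not yet give a contradiction.
\end{enumerate}

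The contradiction must therefore come from summing: we use $u=\sum_i a_i u_i$ over a disjoint family and exploit the nonlinearity between $\ell^p$ and $\ell^q$, namely $\|u\|_{M^{1,p}}^p\simeq\sum a_i^pr_i^{Q-p}$ while $\|Eu\|_{M^{1,q}}^q\gtrsim\sum a_i^qr_i^{Q-q}$. Boundary density gives, for every $k$, roughly $N_k\gtrsim$ (a positive fraction of) $2^{kQ}$ Whitney balls of radius about $2^{-k}$ near $x_0$; this is where Ahlfors regularity, and not mere doubling, is used to count the balls. Choosing weights $a_i$ by H\"older duality then makes the ratio $\|Eu\|_{M^{1,q}}/\|u\|_{M^{1,p}}$ unbounded. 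The lower bound for $Eu$ is localized: extensions of disjoint bumps interact, so we would use that a single Haj\l{}asz gradient $g$ of $Eu$ dominates $|Eu(x)-Eu(y)|/d(x,y)$ pointwise, and integrate $g^q$ over disjoint shells $B(x_i,Cr_i)$. The condition $p<q^*_Q$ is exactly what makes the exponent bookkeeping in the chosen sum diverge, mirroring \cite{KM}.

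The main obstacle is the lower bound for $Eu$ near the boundary. We need that $Eu$ is "forced to drop" between $B(x_i,r_i/2)$ and the nearby boundary, but $Eu$ is not controlled on $\partial\Omega$. To handle this, we would first try using the density of $\partial\Omega$: the bumps of different sizes sit in an alternating pattern, so every ball $B(x_i,Cr_i)$ also contains a large set of points (either boundary points or points of other Whitney balls where we prescribe $u=0$), and we would choose the $a_i$ lacunary in scale to guarantee this drop.
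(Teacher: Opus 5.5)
There is a genuine gap, and it sits exactly where you place the contradiction. Your opening observation is right: Ahlfors regularity replaces connectivity by supplying reverse doubling at boundary points. But summing bumps over disjoint Whitney balls can never make $\|Eu\|_{M^{1,q}(X)}/\|u\|_{M^{1,p}(\Omega)}$ unbounded.

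With your own bookkeeping, put $b_i:=a_ir_i^{(Q-p)/p}$, so that $a_i^qr_i^{Q-q}=b_i^q\,(r_i^Q)^{1-q/p}$. H\"older's inequality with exponents $p/q$ and $p/(p-q)$ then gives, for every choice of weights,
\[
\Bigl(\sum_i a_i^qr_i^{Q-q}\Bigr)^{1/q}\le\Bigl(\sum_i r_i^Q\Bigr)^{\frac1q-\frac1p}\Bigl(\sum_i a_i^pr_i^{Q-p}\Bigr)^{1/p}\lesssim\mu(\Omega)^{\frac1q-\frac1p}\Bigl(\sum_i a_i^pr_i^{Q-p}\Bigr)^{1/p}.
\]
This uses only that the $B_i\subset\Omega$ are disjoint, and the same holds for disjoint enlarged balls $B(x_i,Cr_i)$ in a fixed bounded set. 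So the ``H\"older duality'' supremum is finite. It is just the trivial inclusion $L^p\subset L^q$ on a bounded set, and $p<q^*_Q$ never enters, which signals the mechanism is wrong.

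Two supporting claims also fail. First, the count $N_k\gtrsim 2^{kQ}$ is false. Density zero of $\Omega$ at $x_0$ gives the \emph{upper} bound $\sum_kN_k2^{-kQ}\lesssim\mu(\Omega\cap B(x_0,2r))<\infty$, which is incompatible with a lower bound of that size at every scale. Second, the drop in step~3 is unjustified. $Eu_i$ is unconstrained on $X\setminus\Omega$, so Poincar\'e on $B(x_i,Cr_i)$ only sees the part of $\Omega$ where $u=0$. Even when it works, it yields at most the per-ball bound $a_i^qr_i^{Q-q}$, which the display shows is harmless.

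The missing idea is that the $\ell^p$--$\ell^q$ summation must be used to \emph{localize} the extension, not to contradict it, and the test functions must be centered at the boundary point rather than on Whitney balls.

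\begin{itemize}
\item The paper encodes your summation in the quasiadditive set function $\Phi$ (Theorem~\ref{thmE.3}). The covering argument of Lemma~\ref{lemmafun} then gives, for a.e.\ $x\in\partial\Omega$ and all small $r$, a local extension with $\|g\|_{L^q(B(x,r))}\le C_x\mu(B(x,r))^{\frac1q-\frac1p}\|u\|_{M^{1,p}_0(B(x,r),\Omega)}$ (Corollary~\ref{corollary}). The key point is that this factor involves the ambient measure $\mu(B(x,r))$, not $\mu(\Omega)$.
\item Test with $u=1$ on $B(x,r/2)\cap\Omega$ and $u=0$ off $B(x,r)$, whose norm is $\lesssim r^{-1}\mu(B(x,r)\cap\Omega)^{1/p}$. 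The Sobolev--Poincar\'e inequality in $X$ on $B(x,Lr)$ then needs no information on $X\setminus\Omega$ and yields $\min\{\mu(A_{r,Lr,\Omega}),\mu(B(x,r/2)\cap\Omega)\}\le C\delta^{q^*_Q/p-1}\mu(B(x,r)\cap\Omega)$ (Lemma~\ref{lemma1}). Here $\delta$ bounds $\mu(B(x,r)\cap\Omega)/\mu(B(x,r))$, which tends to $0$ at a density point of $\partial\Omega$, and this is exactly where $p<q^*_Q$ is used.
\item Reverse doubling at $x$, which is the only role of Ahlfors regularity, upgrades this dichotomy to geometric decay $\mu(B(x,r/2)\cap\Omega)\le c\,\mu(B(x,r)\cap\Omega)$ with $c<2^{-s}$ (Lemma~\ref{upper}).
\item The contradiction comes from the polynomial lower bound $\mu(B(x,r)\cap\Omega)\ge\theta r^s$ of Lemma~\ref{lem: lower measure bound}, itself derived from the extension property via Lemma~\ref{iteration}.
\end{itemize}

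You also need Lemma~\ref{small q extension} to reduce to $q<Q$.
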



The Ahlfors regularity assumption in Theorem~\ref{thmE.6-notconnected} can be replaced by the weaker assumption that $\mu$ is $Q$-doubling and satisfies an appropriate 
local reverse-doubling condition at almost every boundary point; see Remark~\ref{RDspaces}. 
In particular, the conclusion holds in RD-spaces.

The restriction $p<q_Q^*$ in Theorems~\ref{thmE.6} and \ref{thmE.6-notconnected} is natural and cannot be completely removed. Indeed, the aforementioned example of Koskela, Ukhlov, and Zhu \cite{PUZ}  yields, for appropriate exponents, an $(M^{1,p},M^{1,q})$-extension domain with a boundary of positive measure. Thus,  some upper bound on $p$ remains essential, even in the metric measure context considered here.

The overall strategy for proving Theorem~\ref{thmE.6} is inspired by  \cite{KM} on $(W^{1,p},W^{1,q})$-extension domains; however, its implementation in the metric setting requires a few substantial modifications. Their Euclidean proof relies heavily on properties specific to $\mathbb{R}^n$, including the Ahlfors regularity of Lebesgue measure and the geometric structure of Euclidean balls and annuli. Since these ingredients do not carry over automatically to general metric measure spaces, we instead develop an argument adapted to the metric setting, based on quasiadditive set functions associated with extension operators, local Sobolev-type estimates in doubling spaces, and an iterative measure-decay argument. This approach allows us to obtain the desired boundary measure estimates without appealing to any ambient Euclidean geometry.


The paper is organized as follows. In Section~\ref{preli} we recall the necessary background on Haj{\l}asz--Sobolev spaces and quasiadditive set functions associated with extension operators. Section~\ref{lemmas} contains several auxiliary estimates used in the proofs of the main results. Finally, in Section~\ref{proofs of main results} we combine these ingredients to prove Theorems~\ref{thmE.6} and~\ref{thmE.6-notconnected}.

\subsection{{Notational Conventions}}
Let $\mathbb{N}:=\{1,2,\dots\}$ and $\mathbb{N}_0:=\mathbb{N}\cup\{0\}$. Open (metric) balls in a given metric space $(X,d)$ shall be denoted by $B(x,r)=\{y:\, d(x,y)<r\}$. 
We always denote by $C$ a  positive constant which is independent of the main parameters involved, but it may vary from line to line. We also use $C_{x,r}$ to denote a positive constant depending on the indicated parameters $x$ and $r$.  The characteristic function of a set $E$ will be denoted by $\chi_E$.

\section{Preliminaries}\label{preli}
A triplet $(X,d,\mu)$ is called a \textit{metric measure space} if $(X,d)$ is a metric space and $\mu$ is a nonnegative Borel regular measure on $X$ with the property that the measure of every metric ball is strictly positive and finite. If $\mu$ is \textit{doubling}, that is, there exists a constant $C_{\mathrm{doub}}\geq1$ such that
$$
\mu(B(x,2r))\leq C_{\mathrm{doub}}\mu(B(x,r)),
$$
for every $x\in X$ and $0<r<\infty$, then $(X,d,\mu)$ is called a \textit{doubling metric measure space.}
Given $Q>0$, we say that $\mu$ is  \textit{$Q$-doubling} if there exists a constant $C\geq 1$ such that
$$
\mu(B(y,R))
\leq
C\bigg(\frac{R}{r}\bigg)^Q \mu(B(x,r)),
$$
whenever $x,y\in X$, $0<r\leq R$, and  $B(x,r)\subset B(y,R)$. Moreover, $\mu$ is said to be
\textit{$Q$-Ahlfors regular} if there exists a constant $C\geq 1$ such that
$$
C^{-1}r^Q
\leq
\mu(B(x,r))
\leq
Cr^Q,
$$
for every $x\in X$ and $0<r<\diam(X)$. Note that every $Q$-Ahlfors regular measure is $Q$-doubling, and hence doubling.
Conversely,  every doubling measure is $Q$-doubling for every $Q\geq\log_2(C_{\mathrm{doub}})$.

Let $\mathcal D_{\Omega}(u)$ denote the collection of Haj{\l}asz gradients of $u$ on $\Omega$. That is, $g \in \mathcal D_{\Omega}(u)$
provided $g \colon \Omega \to [0,\infty]$ is measurable and there exists a set
$E \subset \Omega$ with $\mu(E)=0$ such that
\[
    |u(x)-u(y)|
    \le d(x,y)\bigl(g(x)+g(y)\bigr)
\]
for all $x,y \in \Omega \setminus E$. 
If $\Omega=X$ then we will simply write $\mathcal D(u)$ in place of $\mathcal D_{X}(u)$. For $p>0$, the Haj\l{}asz--Sobolev space $M^{1,p}(\Om)$ consists of all $u\in L^{p}(\Om)$ such that $\mathcal{D}_{\Om}(u)\cap L^{p}(\Om) \neq \emptyset$. For $u\in M^{1,p}(\Om)$, we define
\begin{equation}\label{m1pnorm}
    \|u\|_{M^{1,p}(\Om)}:= \|u\|_{L^{p}(\Om)}+ \inf_{g\in \mathcal D_\Omega(u)} \|g\|_{L^{p}(\Om)}.
\end{equation}
When $0<p<1$, the quantity in \eqref{m1pnorm} is only a quasi-norm; for simplicity, we use the term norm throughout.
As is customary, throughout this paper, equalities between measurable functions are understood to hold almost
everywhere.
\begin{definition}
Let $(X,d,\mu)$ be a metric measure space and let $0<q\leq p<\infty$. A nonempty measurable set $\Om\subset X$ is said to have the \textit{$(M^{1,p},M^{1,q})$-extension property} if there exists a map 
    \begin{equation*}
        E: M^{1,p}(\Om) \to M^{1,q}(X),
    \end{equation*}
    and a constant $C>0$ such that, for every $u\in M^{1,p}(\Om)$, $Eu|_{\Om} =u$ and 
    \begin{equation*}
        \|Eu\|_{M^{1,q}(X)}\leq C \|u\|_{M^{1,p}(\Om)}.
    \end{equation*}
\end{definition}

We recall two important estimates from  \cite[Theorem~20  and Remark~21]{agh20}. 
\medskip

\noindent\textit{Local Sobolev inequality:} Let $(X,d,\mu)$ be a $Q$-doubling 
metric measure space and $0< q<Q$.  There exists a positive constant $C$ such that
\begin{equation}\label{GBLS}
\|u\|_{L^{q^*}(B)}\leq 
        \frac{C}{\mu(2B)^{\frac{1}{Q}}}\left[\diam{(B)}\,\|g\|_{L^{q}(2B)}+\|u\|_{L^{q}(2B)}\right]
\end{equation}	
holds for every ball $B\subset X$, $u\in {M}^{1,q}(2B)$, and  $g\in \mathcal D_{2B}(u)$, where $q^*=Qq/(Q-q)$. 
\bigskip


\noindent\textit{Sobolev-Poincar\'e inequality:} Let $(X,d,\mu)$ be a $Q$-doubling 
metric measure space and $0< q<Q$.  There exists a positive constant $C$ such that
\begin{equation}\label{local Sobolev Poincare}
\inf_{\gamma\in\mathbb{R}}\|u-\gamma\|_{L^{q^*}(B)}\leq 
        C\frac{\diam{(B)}}{\mu(2B)^{\frac{1}{Q}}}\,\|g\|_{L^{q}(2B)}
\end{equation}	
holds for every ball $B\subset X$, $u\in {M}^{1,q}(2B)$, and  $g\in \mathcal D_{2B}(u)$, where $q^*=Qq/(Q-q)$.


We need a set function associated with our extension operator. We borrow it from \cite{PUZ}; see  also  \cite{1,2}. For the convenience of the reader, we review its construction and its crucial properties.

\begin{definition}
\label{def: quasiadditive function}
Let $(X,d,\mu)$ be a metric measure space. A set function $\Phi$, defined on the open subsets of $X$ and taking values in $[0,\infty)$, 
    is called \textit{quasiadditive} if it is monotone, that is, 
    \begin{equation*}
        \Phi(U_{1})\leq \Phi(U_{2}),
    \end{equation*}
   whenever $U_{1}, U_{2}\subset X$ are open and $U_{1}\subset U_{2},$ 
    and there exists a positive constant $C$ such that, for every collection $\{U_{i}\}_{i\in \mathbb{N}}$ of pairwise disjoint open subsets of $X$,   \begin{equation*}\label{constant}
        \sum_{i=1}^{\infty}\Phi(U_{i})\leq C\Phi\left(\bigcup_{i=1}^{\infty}U_{i}\right).
    \end{equation*}
\end{definition}
The upper derivative of a quasiadditive set function, $\Phi$, is defined as
\begin{equation*}
    \overline{D\Phi}(x):= \limsup_{r\to0^{+}}{\frac{\Phi(B(x,r))}{\mu(B(x,r))}}.
\end{equation*}
We will use the following variant of a standard
differentiation result for quasiadditive set functions; see \cite{3,RPR}.

\begin{lemma}\label{lemmafun}
Let $(X,d,\mu)$ be a doubling metric measure space, and let $\Phi$ be a quasiadditive set function defined on all open subsets of $X$. Suppose that there exists $M>0$ such that $\Phi(B)\leq M$ for all balls $B\subset X.$ Then, $\overline{D\Phi}(x)<\infty$ for almost every $x\in X$.
\end{lemma}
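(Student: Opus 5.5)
Here $\Phi$ is monotone, quasiadditive and bounded on balls, so I will prove the standard Vitali-type argument: the set where $\overline{D\Phi}=\infty$ has measure zero.

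Fix $\lambda>0$ and let
\[
A_\lambda:=\{x\in X:\ \overline{D\Phi}(x)>\lambda\}.
\]
It suffices to show that $\mu(A_\lambda\cap B_0)\le C\lambda^{-1}M$ for every fixed ball $B_0=B(x_0,R)$, with $C$ independent of $\lambda$. Letting $\lambda\to\infty$ then gives $\mu(\{\overline{D\Phi}=\infty\}\cap B_0)=0$, and exhausting $X$ by countably many balls finishes the proof. The set $A_\lambda$ need not be measurable, so I will work with outer measure $\mu^*$. Since $\mu$ is Borel regular, null outer measure is enough.

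\textbf{Covering.} For each $x\in A_\lambda\cap B_0$ there are arbitrarily small radii $r<R$ with $\Phi(B(x,r))>\lambda\mu(B(x,r))$. The family of such balls is a fine cover of $A_\lambda\cap B_0$ by balls of uniformly bounded radius, all contained in $B(x_0,2R)$. By the basic $5r$-covering lemma I extract a countable pairwise disjoint subfamily $\{B_i\}$ with
\[
A_\lambda\cap B_0\subset\bigcup_i 5B_i.
\]
Doubling gives $\mu(5B_i)\le C_{\mathrm{doub}}^3\mu(B_i)$.

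\textbf{Estimate.} Using the choice of the balls, then quasiadditivity on the disjoint open sets $B_i$, then monotonicity:
\[
\mu^*(A_\lambda\cap B_0)\le C\sum_i\mu(B_i)\le \frac{C}{\lambda}\sum_i\Phi(B_i)\le\frac{C}{\lambda}\,\Phi\Big(\bigcup_i B_i\Big)\le \frac{C}{\lambda}\,\Phi(B(x_0,2R))\le\frac{CM}{\lambda}.
\]
This is the required bound.

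The only delicate point is the covering step. One has to make sure the disjoint subfamily is countable, which holds because $\mu$ is doubling and so $X$ is separable. The radius bound $r<R$ keeps every selected ball inside $B(x_0,2R)$, which is what makes the hypothesis $\Phi(B)\le M$ applicable. Measurability of $A_\lambda$ is avoided entirely by the outer-measure formulation.
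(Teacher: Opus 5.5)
Your proof is correct and follows essentially the same route as the paper. Both arguments cover the bad set inside a fixed ball by small balls with $\Phi(B)>\lambda\mu(B)$, extract a disjoint subfamily via the $5B$-covering lemma, and then chain doubling, quasiadditivity, monotonicity and the bound $M$ before letting $\lambda\to\infty$ and exhausting $X$.

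The differences are cosmetic. You estimate the larger set $\{\overline{D\Phi}>\lambda\}\cap B_0$ and get countability of the disjoint subfamily from separability. The paper works directly with $\{\overline{D\Phi}=\infty\}\cap B(x_0,R)$ and gets countability from the disjoint balls having positive measure inside $B(x_0,R+1)$.
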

\begin{proof}
    Fix $x_0\in X$, and for $R>0$, set
    \begin{equation*}
        E_{R}:= \{x \in B(x_0, R): \overline{D\Phi}(x)=\infty\}.
    \end{equation*}
     We will show that $\mu^*(E_{R})=0$, where $\mu^*$ denotes the outer measure associated with $\mu$.\footnote{Passing to $\mu^*$ allows us to avoid any measurability assumption on $D\Phi$.}
     To this end, note that for each $t>0$ and $x\in E_R$, the definition of $\overline{D\Phi}$ yields a radius $r_x<1$ such that 
    \begin{equation*}\label{eq: Phi balls lim sup}
        \Phi(B(x,r_x))> t \mu(B(x,r_x)).
     \end{equation*}
 Consider the family $\mathcal{F}:=\{B(x,r_x): x\in E_{R}\}$ of balls. Appealing to the $5B$-covering lemma (see, e.g, \cite{HKSTbook}), there exists a pairwise disjoint subfamily $\{B_i\}_{i\in I}$ of $\mathcal{F}$ such that
 $$
 E_{R}\subset \bigcup_{B\in\mathcal{F}}B\subset\bigcup_{i\in I} 5B_i.
 $$
 Moreover, since the balls $\{B_i\}_{i\in I}$ are pairwise disjoint, $\mu$ is positive and finite on every ball, and
 $$
 \bigcup_{i\in I}B_i\subset B(x_0, R+1),
 $$
  we can conclude that $I$ is necessarily at most countable.
Therefore,
 \begin{equation}
     \begin{split}
         t \mu^*(E_{R}) &\leq t  \sum_{i=1}^\infty \mu(5B_i)
          \leq Ct \sum_{i=1}^\infty\mu(B_i)\\
         & \leq C \sum_{i=1}^\infty\Phi(B_i)
         \leq  C \Phi\left(\bigcup_{i=1}^\infty B_i\right)\\
         &\leq  C \Phi(B(x_0, R+1))
          \leq CM.
     \end{split}
 \end{equation}
  Hence, $\mu^*(E_{R}) \leq CM/t$, and by
   letting $t\to\infty$, we obtain $\mu^*(E_{R})=0$.
Thus,
$$
\mu^*\bigl(\{x \in X: \overline{D\Phi}(x)=\infty\}\bigr)=\mu^*\left(\bigcup_{n=1}^\infty E_n\right)
\leq\sum_{n=1}^\infty\mu^*(E_n)=0,
$$
from which we conclude that $\overline{D\Phi}(x)<\infty$ for almost every $x\in X$.
\end{proof}
Note that if $\overline{D\Phi}(x)<\infty$, then there exist constants $A_x,r_x>0$,  such that
\begin{equation}\label{funcset}
    \Phi(B(x,r))\leq A_x \mu(B(x,r)),
\end{equation}
for all $0<r<r_x.$

Let $\Omega\subset X$ be a nonempty measurable set having the $(M^{1,p},M^{1,q})$-extension property for some $0<q<p<\infty$.
For an open set $U \subset X$, define $M^{1,p}_0(U,\Omega)$ to be the
collection of all functions $u \in L^p(\Omega)$ such that
$u = 0$ a.e. on $\Omega \setminus U$,
and for which there exists some $g \in \mathcal D_{\Omega}(u) \cap L^p(\Omega)$
satisfying $g = 0$ a.e. on  $\Omega \setminus U$.
We equip $M^{1,p}_0(U,\Omega)$ with the `norm'
\[
    \|u\|_{M^{1,p}_0(U,\Omega)}
    :=
        \left(\int_{U\cap\Omega} |u|^p \, d\mu
        +
        \inf_{g\in\mathcal{D}_\Omega(u)}
    \int_{U\cap\Omega} g^p \, d\mu
    \right)^{1/p},
\]
where the infimum is taken over all functions
$g \in \mathcal D_{\Omega}(u) \cap L^p(\Omega)$ satisfying
$g = 0$ a.e. on $\Omega \setminus U$. 

For every open set $U\subset X$ such that $U\cap\Omega\neq \varnothing$ and each $u\in M_{0}^{1,p}(U,\Omega),$ we define the $q$--Dirichlet energy $\Gamma^{q}_{U}$ on $U$ with respect to the boundary value $u$ by setting
\begin{equation*}\label{sefun}
    \Gamma_{U}^{q}(u):= \inf\left\{\|g\|_{L^q(U)}:v\in M^{1,q}(U),v|_{U\cap \Omega}= u, g\in \mathcal{D}_U(v)\right\}.
\end{equation*}
Let $k>0$ be given by $\frac{1}{k}=\frac{1}{q}-\frac{1}{p}.$ We define a set function $\Phi$ on open subsets $U\subset X$ by 
\begin{equation}\label{setfun}
    \Phi(U):= \sup\left\{\left(\frac{\Gamma_{U}^{q}(u)}{\|u\|_{M^{1,p}_{0}(U,\Omega)}}\right)^k: u\in M_{0}^{1,p}(U,\Omega),\,\,\|u\|_{M^{1,p}_{0}(U,\Omega)}>0\right\}
\end{equation}
whenever $U\cap\Omega\neq\varnothing$, and $\Phi(U):=0$ otherwise. Hereafter, we use the convention $\sup\varnothing:=0$.

The following theorems are slight adaptations of results in \cite{PUZ}. According to the first theorem,  $\Phi$ is a quasiadditive set function. The second theorem gives an important gradient estimate. To prove these theorems, we need the following lemma, which is a small modification of \cite[Lemma~3.2]{PUZ}.

\begin{lemma}\label{Lemma E.4}
 Let $(X,d,\mu)$ be a  metric measure space and suppose that  $\Omega\subset X$ is a nonempty measurable set having the $(M^{1,p},M^{1,q})$-extension property for some $0<q<p<\infty$. Let $U \subset X$ be an open set with $U\cap \Omega \neq \varnothing$. Then for every $u\in M_0^{1,p}(U,\Omega)$ and every $\lambda\geq 0$, we have $\Gamma_U^q(\lambda u)= \lambda\Gamma_U^q(u)$.
\end{lemma}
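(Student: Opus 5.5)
The plan is to prove positive homogeneity of $\Gamma_U^q$ by exhibiting a bijection between the competitor classes for $u$ and for $\lambda u$. The case $\lambda=0$ is handled separately, and the case $\lambda>0$ follows from the scaling behaviour of Haj\l{}asz gradients.

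\emph{The case $\lambda=0$.} Here $\lambda u=0$. We show that $\Gamma_U^q(0)=0$ and that $\Gamma_U^q(u)<\infty$, so that $0\cdot\Gamma_U^q(u)=0$ is unambiguous.
\begin{itemize}
\item For $\Gamma_U^q(0)=0$, take $v\equiv 0$ on $U$. Then $v\in M^{1,q}(U)$, $v|_{U\cap\Omega}=0$, and $g\equiv 0$ belongs to $\mathcal D_U(v)$. Hence $\Gamma^q_U(0)\le 0$, and the quantity is trivially nonnegative.
\item For finiteness of $\Gamma_U^q(u)$, we need at least one admissible competitor. Extend $u$ by zero to all of $\Omega$; this is already the situation since $u\in L^p(\Omega)$ with $u=0$ a.e. on $\Omega\setminus U$. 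Apply the extension operator $E$ to obtain $Eu\in M^{1,q}(X)$ with $Eu|_\Omega=u$.
\item Let $h\in\mathcal D(Eu)\cap L^q(X)$. The restriction $v=Eu|_U$ lies in $M^{1,q}(U)$, agrees with $u$ on $U\cap\Omega$, and $h|_U\in\mathcal D_U(v)$, because the pointwise inequality restricts to subsets.
\end{itemize}
This gives $\Gamma_U^q(u)\le\|h\|_{L^q(U)}<\infty$, so both sides of the claimed identity equal $0$.

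\emph{The case $\lambda>0$.} The key observation is that the map $(v,g)\mapsto(\lambda v,\lambda g)$ is a bijection from admissible pairs for $u$ onto admissible pairs for $\lambda u$, with inverse given by multiplication by $\lambda^{-1}$. The verification has three parts.
\begin{itemize}
\item If $v\in M^{1,q}(U)$, then $\lambda v\in M^{1,q}(U)$.
\item If $v|_{U\cap\Omega}=u$, then $\lambda v|_{U\cap\Omega}=\lambda u$.
\item If $g\in\mathcal D_U(v)$, with exceptional null set $E$, then for $x,y\in U\setminus E$,
\[
|\lambda v(x)-\lambda v(y)|=\lambda|v(x)-v(y)|\le d(x,y)\bigl(\lambda g(x)+\lambda g(y)\bigr),
\]
so $\lambda g\in\mathcal D_U(\lambda v)$.
\end{itemize}
Since $\|\lambda g\|_{L^q(U)}=\lambda\|g\|_{L^q(U)}$, this holds even for $0<q<1$ where the $L^q$ expression is only a quasi-norm, because homogeneity is unaffected. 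Taking infima over corresponding pairs gives $\Gamma_U^q(\lambda u)\le\lambda\Gamma_U^q(u)$. Applying the same bound with $\lambda u$ and $\lambda^{-1}$ in place of $u$ and $\lambda$ gives the reverse inequality.

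There is no real obstacle in this argument. The only delicate points are the following:
\begin{itemize}
\item the admissible class for $u$ must be nonempty, so that the infimum is finite; this uses the extension property and is needed to make sense of $0\cdot\Gamma_U^q(u)$;
\item $\lambda u$ must belong to $M_0^{1,p}(U,\Omega)$, which follows by scaling the gradient that vanishes off $U$.
\end{itemize}
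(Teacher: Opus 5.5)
Your proof is correct and follows the paper's argument. For $\lambda>0$ the paper likewise scales admissible pairs $(v,g)\mapsto(\lambda v,\lambda g)$ to get $\Gamma_U^q(\lambda u)\le\lambda\Gamma_U^q(u)$, then applies the same bound with $\lambda u$ and $\lambda^{-1}$ for the reverse inequality; your treatment of $\lambda=0$ (using the extension operator to ensure $\Gamma_U^q(u)<\infty$, so that $0\cdot\Gamma_U^q(u)=0$ is unambiguous) just fills in the case the paper calls immediate.
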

\begin{proof}
We may assume $\lambda>0$, since otherwise the claim is immediate.
If $v\in M^{1,q}(U)$ is such that $v|_{U\cap \Omega}= u$ and $g\in \mathcal{D}_U(v)$, then $(\lambda v)|_{U\cap \Omega}= \lambda u$ and $\lambda g\in \mathcal{D}_U(\lambda v)$. Hence, $\Gamma_U^q(\lambda u)\leq \lambda\Gamma_U^q(u)$. Repeating this same argument with $\lambda u$ in place of $u$ and $\lambda^{-1}$ in place of $\lambda$ gives the opposite inequality $\Gamma_U^q(u)\leq \lambda^{-1} \Gamma_U^q(\lambda u)$, and the claim follows.
%
\end{proof}

\begin{theorem}\label{thmE.3}
Let $(X,d,\mu)$ be a  metric measure space and suppose that  $\Omega\subset X$ is a nonempty measurable set having the $(M^{1,p},M^{1,q})$-extension property for some $0<q<p<\infty$. Then the set function $\Phi$ defined in \eqref{setfun} is a bounded quasiadditive set function defined on open sets $U\subset X.$
\end{theorem}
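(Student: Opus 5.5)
We verify three properties of $\Phi$ from \eqref{setfun}: boundedness, monotonicity, and quasiadditivity. Throughout, $E$ is the extension operator with constant $C_E$.

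\emph{Boundedness.} Fix an open $U$ with $U\cap\Omega\neq\varnothing$ and take $u\in M^{1,p}_0(U,\Omega)$. Since $u\in L^p(\Omega)$ has a Haj\l{}asz gradient in $L^p(\Omega)$, we have $u\in M^{1,p}(\Omega)$ with $\|u\|_{M^{1,p}(\Omega)}\le 2\|u\|_{M^{1,p}_0(U,\Omega)}$. The inequality holds up to a constant depending on $p$ when $p<1$, and it uses that $u$ and the admissible $g$ vanish off $U$. Put $v:=(Eu)|_U\in M^{1,q}(U)$, so that $v|_{U\cap\Omega}=u$. If $h\in\mathcal D(Eu)$, then $h|_U\in\mathcal D_U(v)$. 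Hence
\[
\Gamma^q_U(u)\le \|h\|_{L^q(U)}\le \|Eu\|_{M^{1,q}(X)}\le C\|u\|_{M^{1,p}_0(U,\Omega)}.
\]
It follows that $\Phi(U)\le C^k$ uniformly in $U$.

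\emph{Monotonicity.} Let $U_1\subset U_2$ be open with $U_1\cap\Omega\ne\varnothing$; if the intersection is empty there is nothing to show. Every $u\in M^{1,p}_0(U_1,\Omega)$ also lies in $M^{1,p}_0(U_2,\Omega)$, and its norms agree, because admissible $g$ vanish off $U_1$ and the sets of admissible gradients coincide up to modification off $U_1$. If $v\in M^{1,q}(U_2)$ extends $u$ with $g\in\mathcal D_{U_2}(v)$, then $v|_{U_1}$ extends $u$ on $U_1\cap\Omega$ and $g|_{U_1}\in\mathcal D_{U_1}(v|_{U_1})$. So $\Gamma^q_{U_1}(u)\le\Gamma^q_{U_2}(u)$, and taking suprema gives $\Phi(U_1)\le\Phi(U_2)$.

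\emph{Quasiadditivity.} This is the main step, and I would follow the scaling trick of \cite{PUZ}. Let $\{U_i\}$ be pairwise disjoint and open, with union $U$; discard the $U_i$ with $\Phi(U_i)=0$. Fix $\varepsilon>0$. For each $i$, choose $u_i\in M^{1,p}_0(U_i,\Omega)$ with $\|u_i\|=1$ (possible by Lemma~\ref{Lemma E.4}) such that $\Gamma^q_{U_i}(u_i)^k\ge(1-\varepsilon)\Phi(U_i)$. Fix $N$ and choose weights $\lambda_i=\Phi(U_i)^{1/p}$ for $i\le N$. Define $u:=\sum_{i\le N}\lambda_iu_i$.

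Next we check that $u\in M^{1,p}_0(U,\Omega)$. Take $g:=\sum_i\lambda_i g_i$, where the $g_i$ are near-optimal gradients vanishing off $U_i$. This $g$ is a Haj\l{}asz gradient of $u$ on $\Omega$. For $x\in U_i$ and $y\in U_j$ with $i\ne j$, we have $|u(x)-u(y)|\le|u_i(x)-u_i(y)|+|u_j(x)-u_j(y)|$, and each term is bounded by $d(x,y)$ times the sum of the corresponding gradients. Points outside $U$ are handled in the same way. This gives
\[
\|u\|^p\lesssim\sum_i\lambda_i^p=\sum_i\Phi(U_i).
\]

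For the lower bound, take any $v\in M^{1,q}(U)$ extending $u$ and any $g\in\mathcal D_U(v)$. Then $v|_{U_i}$ extends $\lambda_iu_i$ and $g|_{U_i}\in\mathcal D_{U_i}(v|_{U_i})$. Using Lemma~\ref{Lemma E.4},
\[
\|g\|_{L^q(U)}^q=\sum_i\|g\|^q_{L^q(U_i)}\ge\sum_i\lambda_i^q\Gamma^q_{U_i}(u_i)^q\ge(1-\varepsilon)^{q/k}\sum_i\Phi(U_i)^{q/p+q/k}=(1-\varepsilon)^{q/k}\sum_i\Phi(U_i),
\]
since $q/p+q/k=1$.

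Combining the two bounds,
\[
\Phi(U)^{1/k}\ge\frac{\Gamma^q_U(u)}{\|u\|}\gtrsim\frac{\bigl(\sum\Phi(U_i)\bigr)^{1/q}}{\bigl(\sum\Phi(U_i)\bigr)^{1/p}}=\Bigl(\sum_{i\le N}\Phi(U_i)\Bigr)^{1/k}.
\]
Letting $N\to\infty$ and $\varepsilon\to0$ yields quasiadditivity.

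The delicate point is verifying that $\sum\lambda_ig_i$ is a global Haj\l{}asz gradient on $\Omega$ and that the quasi-norm constants for $p<1$ are harmless. The finite truncation at $N$ avoids any convergence issues.
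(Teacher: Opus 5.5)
Your proof is correct and follows the paper's argument essentially step for step: the same boundedness estimate via the extension operator, the same monotonicity argument, and the same scaling trick from \cite{PUZ} (your normalized $u_i$ weighted by $\lambda_i=\Phi(U_i)^{1/p}$ is exactly the paper's rescaling to $\|u_i\|^p\approx\Phi(U_i)$), closed off with $\frac1q-\frac1p=\frac1k$. One small correction: in the monotonicity step the norms $\|u\|_{M^{1,p}_0(U_1,\Omega)}$ and $\|u\|_{M^{1,p}_0(U_2,\Omega)}$ need not agree, since a gradient allowed to be positive on $U_2\setminus U_1$ can be strictly cheaper; however, only the trivial inequality $\|u\|_{M^{1,p}_0(U_1,\Omega)}\geq\|u\|_{M^{1,p}_0(U_2,\Omega)}$ is needed, and it goes in the right direction for $\Phi(U_1)\leq\Phi(U_2)$.
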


\begin{proof} Let $E: M^{1,p}(\Omega)\to M^{1,q}(X)$ be a bounded extension operator, and let $k$ be defined by $\frac{1}{k}=\frac{1}{q}-\frac{1}{p}$. For each open set $U\subset X$ with $U\cap\Omega \neq \varnothing$ and every $u\in M^{1,p}_0(U,\Omega)$, we have
\begin{equation}\label{2.19}
     \Gamma_{U}^{q}(u) \leq \|Eu\|_{M^{1,q}(X)}
     \leq C\|u\|_{M^{1,p}(\Omega)}\leq C \|u\|_{M^{1,p}_0(U,\Omega)}.
\end{equation}
Hence, from \eqref{2.19}, the boundedness of $\Phi$ follows.

Next, let $U_1 \subset U_2 \subset X$ be two open sets. If $U_1 \cap \Omega = \varnothing$, then we have $0 = \Phi(U_1) \leq \Phi(U_2)$. Hence, we assume that $U_1 \cap \Omega \neq \varnothing$. Let $u \in M_0^{1,p}(U_1,\Omega) \subset M_0^{1,p}(U_2,\Omega)$ be arbitrary. Then for each $v \in M^{1,q}(U_2)$ with $v|_{U_2 \cap \Omega} = u$, and $g\in \mathcal{D}_{U_2}(v)$, we have
$\|g\|_{L^{q}(U_2)}\ge\|g\|_{L^{q}(U_1)},$
from which we can deduce that $\Gamma^{q}_{U_2}(u) \ge \Gamma^{q}_{U_1}(u)$. Also, since $u \in M_0^{1,p}(U_1,\Omega) \subset M_0^{1,p}(U_2,\Omega)$ and $U_1 \subset U_2$, we have $\|u\|_{M^{1,p}_{0}(U_1, \Om)}\geq  \|u\|_{M^{1,p}_{0}(U_2,\Om)}$. Hence, we obtain the monotonicity of $\Phi$, that is $\Phi(U_1) \leq \Phi(U_2)$.

To show the quasiadditivity inequality for $\Phi$, let $\{U_i\}_{i=1}^{\infty}$ be a pairwise disjoint collection of open sets. Fix $N \in \mathbb{N}$ and set $U_0 := \bigcup_{i=1}^{N} U_i$. Let $0 < \varepsilon < 1$. For each $i\leq N$, if $\Phi(U_i)=0$,  we set $u_i:=0$. If $\Phi(U_i)>0$, then
using the definition of $\Phi$, we can choose a function $u_i \in M_0^{1,p}(U_i, \Omega)$ such that $\|u_i\|_{M^{1,p}_{0}(U_i,\Omega)}>0$ and
\begin{equation}\label{3.4}
\Gamma^{q}_{U_i}(u_i) \ge \left( \Phi(U_i) \left(1 - \frac{\varepsilon}{2^i}\right) \right)^{\frac{1}{k}}
\|u_i\|_{M^{1,p}_{0}(U_i, \Omega)}.
\end{equation}
Using Lemma~\ref{Lemma E.4} on \eqref{3.4}, we get
$$
\Gamma^{q}_{U_i}(\lambda u_i)=\lambda \Gamma^{q}_{U_i}(u_i) \ge \left( \Phi(U_i) \left(1 - \frac{\varepsilon}{2^i}\right) \right)^{\frac{1}{k}}
\|\lambda u_i\|_{M^{1,p}_{0}(U_i, \Omega)}.
$$
for any $\lambda > 0$. Hence, we may rescale $u_i$ in \eqref{3.4}, so that its norm has any prescribed value. In particular, we can assume
\begin{equation*}\label{3.5}
\|u_i\|_{M^{1,p}_{0}(U_i , \Omega)}^p = \left( \Phi(U_i) \left(1 - \frac{\varepsilon}{2^i}\right)\right).
\end{equation*}
Given this and the choice of $k$, we have
\begin{equation}\label{3.41}
\Gamma^{q}_{U_i}(u_i) \ge \left( \Phi(U_i) \left(1 - \frac{\varepsilon}{2^i}\right) \right)^{\frac{1}{q}}.
\end{equation}

Define $u := \sum_{i=1}^{N} u_i.$
Then $u \in M_0^{1,p}\left( U_0, \Omega \right)$ and, given that the sets $U_i$ are pairwise disjoint, we have
\begin{equation}\label{eq: u norm est}
\|u\|_{M^{1,p}_0(U_0 , \Omega)}\leq\left( \sum_{i=1}^{N}\Phi(U_i) \left(1 - \frac{\varepsilon}{2^i}\right) \right)^{\frac{1}{p}}.
\end{equation}
Let $v \in M^{1,q}\left( U_0\right)$ with $v|_{\Omega\cap U_0} = u$ and let $g\in \mathcal{D}_{U_0}(v)$.
Define $v_i := v|_{U_i}$ for $i = 1, \dots, N$. Then, we have $v_i \in M^{1,q}(U_i)$ with $v_i|_{U_i\cap \Om} = u_i$, $g|_{U_i}\in \mathcal{D}_{U_i}(v_i)$ and, from the definition of $\Gamma^{q}_{U_i}(u_i)$, 
\begin{equation}\label{eqq}
    \|g\|_{L^q(U_i)} \ge \Gamma^{q}_{U_i}(u_i).
\end{equation}
From \eqref{3.41} and \eqref{eqq}, we obtain
\begin{equation}\label{3.7}
\|g\|_{L^q(U_0)} =  \left(\sum_{i=1}^{N}\|g\|_{L^q(U_i)}^q\right)^{\frac{1}{q}}
\ge
\left( \sum_{i=1}^{N}  \Phi(U_i)\left(1 - \frac{\varepsilon}{2^i}\right)
 \right)^{\frac{1}{q}}.
\end{equation}
From \eqref{3.7}, it follows that
\begin{equation}
\label{eq: sneru}
\Gamma^q_{U_0}(u)\ge
\left( \sum_{i=1}^{N}  \Phi(U_i)\left(1 - \frac{\varepsilon}{2^i}\right)
 \right)^{\frac{1}{q}}.
\end{equation}

On the other hand, by \eqref{eq: u norm est} and the definition of $\Phi$, we have
\begin{equation}
\label{eq: sneru-1}
\Gamma^q_{U_0}(u)\leq\Phi(U_0)^{\frac{1}{k}}\|u\|_{M^{1,p}_{0}(U_0 , \Omega)}\leq\Phi(U_0)^{\frac{1}{k}}\left( \sum_{i=1}^{N}\Phi(U_i) \left(1 - \frac{\varepsilon}{2^i}\right) \right)^{\frac{1}{p}}
\end{equation}
Thus, if 
$$
\sum_{i=1}^{N}\Phi(U_i) \left(1 - \frac{\varepsilon}{2^i}\right)>0,
$$
it follows from \eqref{eq: sneru}, \eqref{eq: sneru-1}, and the identity $\frac{1}{k}=\frac{1}{q}-\frac{1}{p}$, that
$$
\sum_{i=1}^{N}\Phi(U_i) \left(1 - \frac{\varepsilon}{2^i}\right)\leq\Phi(U_0).
$$
Clearly this inequality also holds whenever the left-hand-side is zero. Thus, letting $\varepsilon\to0$, we obtain
$$
\sum_{i=1}^{N}\Phi(U_i)\leq\Phi(U_0)=\Phi\left(\bigcup_{i=1}^N U_i\right)\leq\Phi\left(\bigcup_{i=1}^\infty U_i\right).
$$
Since $N$ is arbitrary,
\[
\sum_{i=1}^{\infty} \Phi(U_i)
\le \Phi\!\left( \bigcup_{j=1}^{\infty} U_j \right).
\]
Hence, $\Phi$ is a bounded quasiadditive set function, and the proof of the theorem is complete.
%
%
%
%
%
\end{proof}
The following theorem is immediate from the definition \eqref{setfun} of the set function $\Phi$.
\begin{theorem}\label{thmset}
Let $(X,d,\mu)$ be a  metric measure space and suppose that  $\Omega\subset X$ is a nonempty measurable set having the $(M^{1,p},M^{1,q})$-extension property for some $0<q<p<\infty$. Let $\Phi$ be the set function from \eqref{setfun}. Then, for each ball $B := B(x, r)$ with $x \in \partial \Omega $ and $\Phi(B)>0$ and every function $u \in M^{1,p}_{0}(B,\Omega)$,  there exist a function $v\in M^{1,q}(B)$ and $g\in D_B(v)$ such that $v|_{B\cap \Omega}= u$ and 
  \begin{equation*}
\|g\|_{L^{q}(B)}\leq 2(\Phi(B))^{\frac{1}{k}}\|u\|_{M^{1,p}_{0}(B,\Omega)},
  \end{equation*}
where $\frac{1}{k}=\frac{1}{q}-\frac{1}{p}$. 
\end{theorem}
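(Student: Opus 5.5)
The argument is essentially a rereading of the definition \eqref{setfun} of $\Phi$ together with the homogeneity in Lemma~\ref{Lemma E.4}. Fix a ball $B=B(x,r)$ with $x\in\partial\Omega$ and $\Phi(B)>0$, and let $u\in M^{1,p}_0(B,\Omega)$. Since $\Phi(B)>0$, the definition forces $B\cap\Omega\neq\varnothing$ (otherwise $\Phi(B)=0$ by convention), so $\Gamma^q_B(u)$ is defined.

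We first dispose of the degenerate case $\|u\|_{M^{1,p}_0(B,\Omega)}=0$. Then $u=0$ a.e.\ on $\Omega$, and we take $v\equiv 0$ on $B$ and $g\equiv 0\in\mathcal D_B(v)$. We have $v\in M^{1,q}(B)$ because $\mu(B)<\infty$, $v|_{B\cap\Omega}=u$ a.e., and the inequality holds trivially as $0\le 0$.

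In the main case $\|u\|_{M^{1,p}_0(B,\Omega)}>0$, $u$ is admissible in the supremum \eqref{setfun}, so
\[
\Gamma^q_B(u)\le \Phi(B)^{\frac1k}\,\|u\|_{M^{1,p}_0(B,\Omega)}.
\]
Because $\Phi(B)>0$ and the norm is positive, the right-hand side is strictly positive. Hence
\[
\Gamma^q_B(u)< 2\,\Phi(B)^{\frac1k}\,\|u\|_{M^{1,p}_0(B,\Omega)}.
\]
Next we need the infimum defining $\Gamma^q_B(u)$ to be over a nonempty set, so that it is finite and attained up to any slack. By the extension property, $Eu\in M^{1,q}(X)$ with $Eu|_\Omega=u$. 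If $h\in\mathcal D(Eu)\cap L^q(X)$, then $v_0:=Eu|_B$ lies in $M^{1,q}(B)$, satisfies $v_0|_{B\cap\Omega}=u$, and has $h|_B\in\mathcal D_B(v_0)$. So the admissible class is nonempty, exactly as used in \eqref{2.19}.

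By the definition of infimum, we can then pick an admissible $v\in M^{1,q}(B)$ with $v|_{B\cap\Omega}=u$ and $g\in\mathcal D_B(v)$ such that $\|g\|_{L^q(B)}$ lies strictly below the value $2\Phi(B)^{1/k}\|u\|_{M^{1,p}_0(B,\Omega)}$. This gives the claim.

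The only delicate point, and the reason for the hypothesis $\Phi(B)>0$ and the factor $2$, is ensuring a strict gap between $\Gamma^q_B(u)$ and the bound, so that a near-minimizer exists. Without positivity, the infimum might not be attained when the bound equals zero. Lemma~\ref{Lemma E.4} is not strictly needed here, although it shows the estimate is scale invariant. The assumption $x\in\partial\Omega$ plays no role beyond fixing the setting in which the result will be applied.
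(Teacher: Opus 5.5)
Your proof is correct and is the same argument the paper has in mind when it calls the result ``immediate from the definition'' \eqref{setfun}. For $\|u\|_{M^{1,p}_0(B,\Omega)}>0$, the factor $2$ gives a strict gap above $\Gamma^q_B(u)\le\Phi(B)^{1/k}\|u\|_{M^{1,p}_0(B,\Omega)}$ (finite since $\Phi$ is bounded by Theorem~\ref{thmE.3}), so a near-minimizing pair $(v,g)$ exists; the zero-norm case is handled by $v=g=0$, exactly as the paper does in the proof of Corollary~\ref{corollary}.
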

By combining Theorem~\ref{thmset} and \eqref{funcset} we obtain a key estimate.
\begin{corollary}\label{corollary}
Let $(X,d,\mu)$ be a  metric measure space and suppose that  $\Omega\subset X$ is a nonempty measurable set having the $(M^{1,p},M^{1,q})$-extension property for some $0<q<p<\infty$. Let $\Phi$ be the set function from \eqref{setfun}. Then for every $x\in\partial\Omega$ for which $\overline{D\Phi}(x)<\infty,$ there exist $r_{x}>0$ and $C_{x}>0$ so that, for every $0<r<r_x$ and every $u\in M^{1,p}_{0}(B(x,r),\Omega)$, there exist $v\in M^{1,q}(B(x,r))$ and $g\in D_{B(x,r)}(v)$ such that $v|_{B(x,r)\cap\Omega}=u$ and
\begin{equation*}
\|g\|_{L^{q}(B(x,r))}\leq C_x \mu(B(x,r))^{\frac{1}{k}}\|u\|_{M^{1,p}_{0}(B(x,r),\Omega)},
  \end{equation*}
 where $\frac{1}{k}=\frac{1}{q}-\frac{1}{p}$. 
In particular, if $X$ is doubling then the conclusion holds for almost every $x\in \partial\Omega.$
\end{corollary}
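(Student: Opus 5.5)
The plan is to combine Lemma~\ref{lemmafun} and Theorem~\ref{thmset} directly, using \eqref{funcset} to trade $\Phi(B(x,r))$ for $\mu(B(x,r))$.

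\emph{Choice of $r_x$ and $C_x$.} Fix $x\in\partial\Omega$ with $\overline{D\Phi}(x)<\infty$. By \eqref{funcset} there are constants $A_x,r_x>0$ such that
\[
\Phi(B(x,r))\le A_x\,\mu(B(x,r)) \qquad\text{for all } 0<r<r_x.
\]
We set $C_x:=2A_x^{1/k}$.

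\emph{The case $\Phi(B(x,r))>0$.} Let $0<r<r_x$ and $u\in M^{1,p}_0(B(x,r),\Omega)$. Theorem~\ref{thmset} gives $v\in M^{1,q}(B(x,r))$ and $g\in\mathcal D_{B(x,r)}(v)$ with $v|_{B(x,r)\cap\Omega}=u$ and
\[
\|g\|_{L^q(B(x,r))}\le 2\Phi(B(x,r))^{1/k}\|u\|_{M^{1,p}_0(B(x,r),\Omega)}.
\]
Since $k>0$, the bound from \eqref{funcset} yields $\Phi(B(x,r))^{1/k}\le A_x^{1/k}\mu(B(x,r))^{1/k}$, and the asserted estimate follows.

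\emph{The case $\Phi(B(x,r))=0$.} This is the only point needing care, since Theorem~\ref{thmset} assumes $\Phi(B)>0$. Note first that $B(x,r)\cap\Omega\neq\varnothing$ because $x\in\partial\Omega$; hence $\Phi(B(x,r))$ is given by the supremum in \eqref{setfun}.
\begin{itemize}
\item If $\|u\|_{M^{1,p}_0(B(x,r),\Omega)}>0$, then $\Phi(B(x,r))=0$ forces $\Gamma^q_{B(x,r)}(u)=0$.
\item If $\|u\|_{M^{1,p}_0(B(x,r),\Omega)}=0$, then $u=0$ a.e., and $\Gamma^q_{B(x,r)}(u)=0$ by taking $v=0$, $g=0$.
\end{itemize}
In either case $\Gamma^q_{B(x,r)}(u)=0$ but the infimum might not be attained. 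We therefore argue as in the proof of Theorem~\ref{thmset}. Since $\Gamma^q_{B(x,r)}(u)=0$, there is an admissible pair $(v,g)$ with $\|g\|_{L^q(B(x,r))}$ as small as desired. If the norm of $u$ is positive, we can make $\|g\|_{L^q(B(x,r))}\le C_x\mu(B(x,r))^{1/k}\|u\|$, because the right-hand side is positive; note that $\mu$ of a ball is positive. If the norm of $u$ is zero, we take $v=0$, $g=0$, which works exactly.

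\emph{The doubling case.} If $X$ is doubling, Theorem~\ref{thmE.3} shows that $\Phi$ is bounded and quasiadditive. Lemma~\ref{lemmafun} then gives $\overline{D\Phi}<\infty$ a.e. on $X$, hence a.e. on $\partial\Omega$, which is the final claim.

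The main obstacle is only the degenerate case $\Phi(B(x,r))=0$ excluded from Theorem~\ref{thmset}, and it is handled by the approximation argument above.
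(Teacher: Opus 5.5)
Your proof is correct and follows the paper's route. You take $A_x,r_x$ from \eqref{funcset} and apply Theorem~\ref{thmset} when $\Phi(B(x,r))>0$. In the degenerate case $\Phi(B(x,r))=0$ you split on whether $\|u\|_{M^{1,p}_0(B(x,r),\Omega)}$ vanishes and use $\Gamma^q_{B(x,r)}(u)=0$ with the positive tolerance $2A_x^{1/k}\mu(B(x,r))^{1/k}\|u\|_{M^{1,p}_0(B(x,r),\Omega)}$, which is exactly the paper's $\varepsilon$. The almost-everywhere claim for doubling $X$, via Theorem~\ref{thmE.3} and Lemma~\ref{lemmafun}, also matches the paper, which simply leaves the appeal to Theorem~\ref{thmE.3} implicit.
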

\begin{proof}
Fix $x\in \partial\Omega$ such that $\overline{D\Phi}(x)<\infty$, and let $r_x,A_x>0$ be as in \eqref{funcset}. Fix $0<r<r_x$, set $B:=B(x,r)$, and let $u\in M_0^{1,p}(B,\Omega)$. If $\Phi(B)>0$ then the desired estimate follows immediately from Theorem~\ref{thmset} and \eqref{funcset}.
Now suppose that $\Phi(B)=0$. If $\|u\|_{M_0^{1,p}(B,\Omega)}=0$, then $u=0$ almost everywhere on $\Omega$, and the desired conclusion follows from taking $v=g=0$. If
$\|u\|_{M_0^{1,p}(B,\Omega)}>0$, then from the definition of $\Phi(B)$, we have $\Gamma_B^q(u)=0$.
Therefore, by the definition of $\Gamma_B^q(u)$, for every $\varepsilon>0$,
there exist $v\in M^{1,q}(B)$ and $g\in \mathcal{D}_B(v)$, with
$v|_{B\cap\Omega}=u$, such that
$\|g\|_{L^q(B)}<\varepsilon$. Consequently, since $\mu(B)>0$, the desired estimate follows from choosing
$$
\varepsilon
:= 2A_x^{1/k}\mu(B)^{1/k}\|u\|_{M_0^{1,p}(B,\Omega)}>0.
$$
If $X$ is doubling, then Lemma~\ref{lemmafun} gives $\overline{D\Phi}(x)<\infty$ for
almost every $x\in X$, and hence the conclusion holds for
almost every $x\in\partial\Omega$.
\end{proof}

\section{Lemmas}
\label{lemmas}
\begin{lemma}
\label{small q extension}
Let $(X,d,\mu)$ be a metric measure space and suppose that  $\Omega\subset X$ is a bounded nonempty measurable set having the $(M^{1,p},M^{1,q})$-extension property for some $0<q<p<\infty$. Then $\Omega$ has the $(M^{1,p},M^{1,q_0})$-extension property for all $0<q_0<q$.
\end{lemma}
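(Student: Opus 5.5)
The plan is to take the given extension $Eu\in M^{1,q}(X)$, cut it off with a Lipschitz function equal to $1$ on $\Omega$, and then use Hölder's inequality on a ball of finite measure to pass from $L^q$ to $L^{q_0}$. The cutoff is needed because $M^{1,q}(X)$ need not embed into $M^{1,q_0}(X)$ when $\mu(X)=\infty$, so the boundedness of $\Omega$ has to be used to localize.

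\emph{Choosing the cutoff.} Since $\Omega$ is bounded, fix $x_0\in X$ and $R>0$ with $\Omega\subset B(x_0,R)$, and set $B:=B(x_0,2R)$. Let
\[
\psi(y):=\max\{0,\min\{1,\,2-d(x_0,y)/R\}\}.
\]
Then $\psi$ is $L$-Lipschitz with $L=1/R$, satisfies $0\le\psi\le1$, equals $1$ on $B(x_0,R)\supset\Omega$, and vanishes outside $B$. For $u\in M^{1,p}(\Omega)$, let $E$ be the given $(M^{1,p},M^{1,q})$-extension operator, and define
\[
E_0u:=\psi\, Eu .
\]
Since $\psi=1$ on $\Omega$, we have $E_0u|_\Omega=u$.

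\emph{A Hajłasz gradient for $\psi v$.} Write $v=Eu$ and fix $g\in\mathcal D(v)$ with $\|g\|_{L^q(X)}\le 2\|v\|_{M^{1,q}(X)}$. I claim that
\[
h:=(g+L|v|)\chi_B\in\mathcal D(\psi v).
\]
Off the exceptional null set, consider three cases for a pair $x,y$.
\begin{itemize}
\item If $x,y\notin B$, then $\psi v(x)=\psi v(y)=0$, so there is nothing to prove.
\item If $x\in B$ and $y\notin B$, then $\psi(y)=0$, so
\[
|\psi v(x)-\psi v(y)|=|\psi(x)-\psi(y)|\,|v(x)|\le L\,d(x,y)|v(x)|\le d(x,y)h(x).
\]
\item If $x,y\in B$, then
\[
|\psi v(x)-\psi v(y)|\le \psi(x)|v(x)-v(y)|+|v(y)|\,|\psi(x)-\psi(y)|\le d(x,y)\big(h(x)+h(y)\big).
\]
\end{itemize}

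\emph{Norm estimates.} Both $\psi v$ and $h$ are supported in $B$, and $\mu(B)<\infty$. Hölder's inequality with exponent $q/q_0>1$ gives
\[
\|\psi v\|_{L^{q_0}(X)}\le \mu(B)^{\frac1{q_0}-\frac1q}\|v\|_{L^q(X)},
\qquad
\|h\|_{L^{q_0}(X)}\le \mu(B)^{\frac1{q_0}-\frac1q}\|h\|_{L^q(X)}.
\]
By the quasi-triangle inequality in $L^q$ (the constant depends only on $q$, since $q$ may be less than $1$),
\[
\|h\|_{L^q(X)}\le C_q\big(\|g\|_{L^q}+L\|v\|_{L^q}\big)\le C\|v\|_{M^{1,q}(X)}.
\]
Combining these with $\|Eu\|_{M^{1,q}(X)}\le C\|u\|_{M^{1,p}(\Omega)}$ yields
\[
\|E_0u\|_{M^{1,q_0}(X)}\le C(R,\mu(B),q,q_0)\,\|u\|_{M^{1,p}(\Omega)}.
\]
Hence $E_0$ is an $(M^{1,p},M^{1,q_0})$-extension operator.

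\emph{Main obstacle.} The one step requiring care is the product rule for Hajłasz gradients, namely keeping $h$ supported in $B$ so that Hölder's inequality applies. The case analysis above handles this. Everything else is routine bookkeeping of the quasi-norm constants when $q<1$.
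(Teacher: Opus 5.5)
Your proposal is correct and follows the paper's proof: cut off the $M^{1,q}$ extension with a Lipschitz function of bounded support that equals $1$ on $\Omega$, use the Haj\l{}asz gradient $(g+L|Eu|)\chi_B$, and apply H\"older's inequality on the finite-measure support to pass from $L^q$ to $L^{q_0}$. The only difference is cosmetic: you verify the Leibniz rule for Haj\l{}asz gradients by a direct case analysis, whereas the paper cites it from the literature.
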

\begin{proof}
Fix $0<q_0<q$ and let $u\in M^{1,p}(\Omega)$. Then by assumption, there exists $Eu\in M^{1,q}(X)$ such that ${E}u|_{\Omega}=u$ and
$$\|Eu\|_{M^{1,q}(X)}
\leq
C\|u\|_{M^{1,p}(\Omega)},
$$
for some constant $C\in(0,\infty)$ that is independent of $u$.
Since $\Omega$ is bounded, we can choose an $L$-Lipschitz
function $\eta:X\to[0,1]$ such that $\eta$ has bounded support and
$\eta\equiv 1$ pointwise on a neighborhood of $\overline{\Omega}$.
Define $\widetilde{E}u:=\eta Eu$. Then $\widetilde{E}u|_{\Omega}=u$.
Let $g\in L^q(X)$ be a Haj{\l}asz gradient
of $Eu$. By \cite[Lemma~5.20]{hk98}, we have that $\widetilde{E}u\in M^{1,q}(X)$,
where 
$$
G:=\bigl(g+L|Eu|\bigr)\chi_{\operatorname{supp}\eta}\in\mathcal{D}(\widetilde{E}u).
$$
Since $q_0<q$ and $\operatorname{supp}\eta$ has finite measure, by H\"older's inequality,
\[
\bigl\|\widetilde{E}u\bigr\|_{L^{q_0}(X)}
\leq
\mu(\operatorname{supp}\eta)^{\frac{1}{q_0}-\frac{1}{q}}
\|Eu\|_{L^q(X)}<\infty.
\]
Similarly,
\[
\|G\|_{L^{q_0}(X)}
\leq
C
\left(
\|g\|_{L^q(X)}
+
\|Eu\|_{L^q(X)}
\right)<\infty.
\]
Taking the infimum over all Haj{\l}asz gradients $g$ of $Eu$
gives $\widetilde{E}u\in M^{1,q_0}(X)$ with
\[
\bigl\|\widetilde{E}u\bigr\|_{M^{1,q_0}(X)}
\leq
C\|Eu\|_{M^{1,q}(X)}
\leq
C\|u\|_{M^{1,p}(\Omega)}.
\]
Hence, $\Omega$  has the $(M^{1,p},M^{1,q_0})$-extension property, as desired.    
\end{proof}
The following is an abstract iteration scheme established in \cite[Lemma~16]{agh20}, based on an argument used in \cite{korobenkomr}.
\begin{lemma}
\label{iteration}
Let $(X,d,\mu)$ be a metric measure space and let $\Omega\subset X$ be a nonempty open set.
Suppose that $a,b,p,t,\theta\in(0,\infty)$ satisfy $a<b$ and $p<t$.
Let  $x\in \Omega$ and $\{r_j\}_{j\in\mathbb{N}}$  be  a sequence of radii  such that
$a\leq r_j\leq b$
and
$$
\lf[\mu(B(x,r_{j+1})\cap\Omega)\r]^{1/t}\leq\theta 2^j \lf[\mu(B(x,r_j)\cap\Omega)\r]^{1/p},
\
\forall\,j\in\mathbb{N}.
$$
Then
$$\mu(B(x,r_{1})\cap\Omega)\geq \theta^{-pt/(t-p)}\,2^{-pt^2/(t-p)^2}.$$
\end{lemma}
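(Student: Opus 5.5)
Write $m_j:=\mu(B(x,r_j)\cap\Omega)$, and note that each $m_j>0$ because $x\in\Omega$ and $\Omega$ is open. The plan is to turn the hypothesis into an inequality for logarithms and iterate it downward to $m_1$.

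Step 1 (normalize). Raise the hypothesis to the power $t$:
\[
m_{j+1}\le \theta^t 2^{jt} m_j^{t/p}.
\]
Set $\alpha:=t/p>1$. We want a lower bound on $m_1$, so the natural quantity is $y_j:=\log_2(1/m_j)$, which is large when $m_j$ is small. Taking $\log_2$ gives
\[
-y_{j+1}\le t\log_2\theta+jt-\alpha y_j,\qquad\text{that is,}\qquad y_{j+1}\ge \alpha y_j-jt-t\log_2\theta .
\]

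Step 2 (boundedness of $y_j$). Because $r_j\ge a$ and $x\in\Omega$ is fixed, $m_j\ge \mu(B(x,a)\cap\Omega)=:c_0>0$ for every $j$. Because $r_j\le b$, also $m_j\le\mu(B(x,b))<\infty$. So $y_j$ is bounded above by $\log_2(1/c_0)$, uniformly in $j$. This is the only place where $a\le r_j\le b$ is used, and it is the crux: a linear recursion with growth factor $\alpha>1$ forces blow-up unless the starting value lies below a threshold.

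Step 3 (threshold). Look for a sequence $z_j=A+Bj$ that is a fixed trajectory of the recursion, namely
\[
z_{j+1}=\alpha z_j-jt-t\log_2\theta .
\]
Matching the coefficients of $j$ gives $B=\alpha B-t$, so $B=t/(\alpha-1)=pt/(t-p)$. Matching the constant terms gives $A+B=\alpha A-t\log_2\theta$, so $A=(B+t\log_2\theta)/(\alpha-1)$. Then $w_j:=y_j-z_j$ satisfies $w_{j+1}\ge \alpha w_j$. If $w_1>0$, then $w_j\ge\alpha^{j-1}w_1\to\infty$. Since $z_j$ grows only linearly, $y_j\to\infty$ as well, which contradicts Step 2. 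Hence $w_1\le0$, i.e.\ $y_1\le z_1=A+B$, so $m_1\ge 2^{-(A+B)}$.

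Step 4 (compute the constants). Use $\alpha-1=(t-p)/p$ and $\alpha/(\alpha-1)=t/(t-p)$. Then
\[
A+B=\frac{\alpha B+t\log_2\theta}{\alpha-1}=\frac{pt^2}{(t-p)^2}+\frac{pt}{t-p}\log_2\theta .
\]
This gives exactly
\[
m_1\ge\theta^{-pt/(t-p)}\,2^{-pt^2/(t-p)^2},
\]
which is the stated bound. The main obstacle is just choosing the affine comparison sequence in Step 3 correctly.
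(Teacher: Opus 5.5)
Your proof is correct. The paper itself does not prove this lemma; it imports it from \cite[Lemma~16]{agh20}. There, following \cite{korobenkomr}, the recursion is unrolled backwards instead of being compared with a model solution. One rewrites the hypothesis as $m_j\geq\theta^{-p}2^{-jp}m_{j+1}^{p/t}$ and chains it $N$ times to obtain
\[
m_1\geq\theta^{-p\sum_{k=0}^{N-1}(p/t)^k}\,2^{-p\sum_{k=0}^{N-1}(k+1)(p/t)^k}\,m_{N+1}^{(p/t)^N},
\]
and then lets $N\to\infty$. The bound $m_{N+1}\geq\mu(B(x,a)\cap\Omega)>0$ gives $\liminf_{N} m_{N+1}^{(p/t)^N}\geq 1$, and the two series sum to $t/(t-p)$ and $t^2/(t-p)^2$, which yields the stated constant.

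Your affine trajectory $z_j=A+Bj$ is a closed-form encoding of those two series: $z_1=A+B=\frac{pt}{t-p}\log_2\theta+\frac{pt^2}{(t-p)^2}$ is exactly the limit of the logarithm of the telescoped exponents. Your exponential blow-up of $w_j$ plays the role of the limit $m_{N+1}^{(p/t)^N}\to 1$. Both arguments rest on the same single fact, that $y_j=\log_2(1/m_j)$ is bounded above uniformly in $j$. The telescoping route is direct and gives an explicit inequality at every finite stage $N$. Yours avoids summing the arithmetico-geometric series, at the cost of a contradiction argument.

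A minor point: since $B=pt/(t-p)>0$, the sequence $z_j$ itself tends to $+\infty$. So already $w_1\geq 0$ contradicts Step~2, which in fact gives you strict inequality. Your remark that $z_j$ ``grows only linearly'' is therefore not needed.
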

The assumption that $\Omega$ is nonempty and open in Lemma~\ref{iteration} is only used to ensure that $(\Omega,d,\mu)$ is a metric measure space.

Recall that, given $q,Q\in(0,\infty)$, we set $q^*_Q:=\frac{qQ}{Q-q}$ if $q<Q$ and $q^*_Q:=\infty$, otherwise. Here and thereafter, we define $\frac{1}{\infty}:=0$.

\begin{lemma}
\label{lem: lower measure bound}
Let $(X,d,\mu)$ be a metric measure space, where $\mu$ is a $Q$-doubling measure for some $Q\in(0,\infty)$. Suppose $\Omega\subset X$ is a bounded nonempty open set having the $(M^{1,p},M^{1,q})$-extension property for some $0< q <p<q^*_Q$, and let $s_0:=\frac{1}{\frac{1}{p}-\frac{1}{q^*_Q}}\geq\max\{p,Q\}$.
Then, for every $s\in[s_0,\infty)$, there exists $\theta\in(0,\infty)$ such that
\begin{equation}
\label{Lbound}
\mu(B(x,r)\cap\Omega)\geq\theta r^s,
\end{equation}
for every $x\in\overline{\Omega}$ and all $r\in(0,{\rm diam}(\Omega)]$, where the value $s=s_0$ is only permissible when $q<Q$.
\end{lemma}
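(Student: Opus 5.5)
The plan is to test the extension operator against cutoff functions and turn the resulting inequality into a recursive measure bound, which Lemma~\ref{iteration} then converts into \eqref{Lbound}. This is the scheme of \cite{agh20,HKTR}, adapted to $q<p$.

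First the reductions. By Lemma~\ref{small q extension} we may decrease $q$ slightly. If $q\ge Q$ and $s>s_0=p$ is given, we pick $q_0<Q$ with $p<(q_0)^*_Q$ and $\frac{1}{\frac1p-\frac1{(q_0)^*_Q}}\le s$. Since $\mu(B\cap\Omega)\le C$ for small balls, a bound with a smaller exponent implies the one with a larger exponent. So it suffices to treat $q<Q$ and $s=s_0$. By continuity of measure we may also take $x\in\Omega$, since points of $\overline\Omega$ are limits of such. Likewise, a bound for $r\le r_0$ implies a bound up to ${\rm diam}(\Omega)$ after shrinking $\theta$.

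Fix $x\in\Omega$ and $0<r\le{\rm diam}(\Omega)$. Choose radii $r_1=r$ and $r_{j+1}=r_j-2^{-j-1}r$, so that $r/2\le r_j\le r$. Let $u_j$ be the Lipschitz cutoff equal to $1$ on $B(x,r_{j+1})$, vanishing off $B(x,r_j)$, with Lipschitz constant $2^{j+1}/r$. Restrict $u_j$ to $\Omega$. Then $g=\frac{2^{j+1}}{r}\chi_{B(x,r_j)\cap\Omega}$ is a Haj\l{}asz gradient on $\Omega$ (up to a harmless constant). This gives
\[
\|u_j\|_{M^{1,p}(\Omega)}\le C\,2^j r^{-1}\mu(B(x,r_j)\cap\Omega)^{1/p},
\]
using $r\le{\rm diam}(\Omega)$ to absorb the $L^p$ term.

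Next apply the extension $Eu_j\in M^{1,q}(X)$ and the local Sobolev inequality \eqref{GBLS} on the ball $B=B(x,r_{j+1})$. Since $Eu_j=1$ on $B\cap\Omega$,
\[
\mu(B(x,r_{j+1})\cap\Omega)^{1/q^*}\le\|Eu_j\|_{L^{q^*}(B)}\le \frac{C(r+1)}{\mu(B(x,2r_{j+1}))^{1/Q}}\|Eu_j\|_{M^{1,q}(X)}.
\]
The $Q$-doubling property gives $\mu(B(x,r/2))\ge C^{-1}r^Q\mu(B(x_0,{\rm diam}\,\Omega))\,{\rm diam}(\Omega)^{-Q}$, uniformly for $x\in\Omega$. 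Combining the two estimates yields
\[
\mu(B(x,r_{j+1})\cap\Omega)^{1/q^*}\le \widetilde\theta\,r^{-1-1}2^j\mu(B(x,r_j)\cap\Omega)^{1/p},
\]
with $\widetilde\theta\approx C r^{-1}$ times constants. More precisely, this is $\theta= C r^{-2}$ when $r$ is small; the exact power is determined by the bookkeeping below.

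Lemma~\ref{iteration} with $t=q^*>p$ then gives $\mu(B(x,r)\cap\Omega)\ge c\,\theta^{-pq^*/(q^*-p)}=c\,\theta^{-s_0}$. The main obstacle is getting the power of $r$ in $\theta$ to be exactly $r^{-1}$, so that the bound reads $\theta^{-s_0}\sim r^{s_0}$. The naive count gives a factor $r^{-1}$ from the gradient and $r\cdot r^{-1}$ from the Sobolev prefactor $\diam(B)/\mu(2B)^{1/Q}$, which is favorable. The danger is the $\|u\|_{L^q}$ term in \eqref{GBLS}, which lacks the factor $\diam(B)$.

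To handle it, I would use the Sobolev--Poincar\'e inequality \eqref{local Sobolev Poincare} on a larger ball, say $B(x,2{\rm diam}\,\Omega)$, together with a point outside the support of $\eta Eu_j$. Alternatively, I would rescale by working with $\|Eu_j\|_{L^q}\le C\|u_j\|_{M^{1,p}}$, which already carries the factor $2^jr^{-1}$. In either case the homogeneity bound becomes $\theta\simeq r^{-1}$, hence $\mu(B(x,r)\cap\Omega)\ge\theta r^{s_0}$. Verifying this bookkeeping, and the uniformity of the doubling lower bound for $\mu(B(x,r))$ over $x\in\overline\Omega$, is the step I expect to require the most care.
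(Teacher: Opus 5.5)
There is a genuine gap. Your overall scheme (the cutoffs $u_j$, the reduction to $q<Q$ via Lemma~\ref{small q extension}, and Lemma~\ref{iteration} with $t=q^*$) is the right one. However, the one step that determines the exponent is not established, and the chain you write down gives the wrong exponent.

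Applying \eqref{GBLS} on the small ball $B=B(x,r_{j+1})$ puts the factor $\mu(2B)^{-1/Q}$ in front of the zeroth-order term $\|Eu_j\|_{L^q(2B)}$. Your doubling bound controls this factor only by $Cr^{-1}$, and that is sharp when $\mu$ is $Q$-Ahlfors regular. The only control you have on the term itself is the global one, $\|Eu_j\|_{L^q(X)}\le C\|u_j\|_{M^{1,p}(\Omega)}\lesssim 2^jr^{-1}\mu(B(x,r_j)\cap\Omega)^{1/p}$. So your bookkeeping gives $\theta\approx r^{-2}$, and Lemma~\ref{iteration} then yields only $\mu(B(x,r)\cap\Omega)\gtrsim r^{2s_0}$.

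Your second repair (``working with $\|Eu_j\|_{L^q}\le C\|u_j\|_{M^{1,p}}$'') is exactly the estimate that produced $r^{-2}$, so it changes nothing. Your first repair points in the right direction, since it uses a ball whose size does not depend on $r$, but it credits the gain to the wrong mechanism. A point, or a region, where $\eta Eu_j$ vanishes controls the constant $\gamma$ in \eqref{local Sobolev Poincare} only if that zero set has measure comparable to the ball. You give no reason why such a set exists uniformly in $x$ and $r$; if $X=\overline{\Omega}$, for instance, $\eta\equiv 1$ and the cutoff creates no zero set at all. The claim that ``in either case $\theta\simeq r^{-1}$'' is therefore left unproved.

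The missing observation is that you should not localize at all. Fix once and for all a ball $B_0\supset\overline{\Omega}$, independent of $x$, $r$ and $j$. Since $Eu_j=u_j=1$ on $B(x,r_{j+1})\cap\Omega\subset B_0$, applying \eqref{GBLS} on $B_0$ gives
\[
\mu(B(x,r_{j+1})\cap\Omega)^{1/q^*}\le\|Eu_j\|_{L^{q^*}(B_0)}\le C_{B_0}\|Eu_j\|_{M^{1,q}(X)}\le C\,r^{-1}2^{j}\,\mu(B(x,r_j)\cap\Omega)^{1/p}.
\]
Here ${\rm diam}(B_0)$ and $\mu(2B_0)^{-1/Q}$ are constants depending only on $\Omega$. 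The zeroth-order term is therefore harmless, and all of the $r$-dependence comes from $\|u_j\|_{M^{1,p}(\Omega)}$. Thus $\theta=Cr^{-1}$ in Lemma~\ref{iteration}, which gives $\mu(B(x,r)\cap\Omega)\ge c\,(Cr^{-1})^{-pq^*/(q^*-p)}=c'\,r^{s_0}$. This is exactly how the paper argues, and it also makes your concern about a uniform lower bound $\mu(B(x,r))\gtrsim r^Q$ unnecessary.

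A minor point: passing from an exponent $s'\le s$ to $s$ uses $r\le{\rm diam}(\Omega)$, not a bound on $\mu(B\cap\Omega)$.
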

\begin{proof}
We first establish \eqref{Lbound} for points $x\in\Omega$. To this end, fix  $x\in\Omega$ and $r\in(0,{\rm diam}(\Omega)]$. 
We define a collection of functions $\{u_j\}_{j\in\mathbb{N}}$
as follows: for each $j\in\mathbb{N}$, let 
$r_j:=(2^{-j-1}+2^{-1})r$ and observe that, for all $j\in\mathbb{N}$,
\begin{equation}
\label{radii}
\frac{1}{2}r<r_{j+1}<r_j\leq\frac{3}{4}r.
\end{equation}
Then, for each $j\in\mathbb{N}$,  define $u_j:\Omega\to[0,1]$ by setting for each $y\in \Omega$,
\begin{eqnarray*}
u_j(y):=
\left\{
\begin{array}{ll}
\,\qquad 1\quad &\mbox{if $y\in{B}(x,r_{j+1})\cap\Omega$,}
\\[6pt]
\displaystyle\frac{r_{j}-d(x,y)}{r_{j}-r_{j+1}}
&\mbox{if $y\in [B(x,r_{j})\cap\Omega]\setminus B(x,r_{j+1})$,}
\\[6pt]
\,\qquad 0 &\mbox{if $y\in \Omega\setminus B(x,r_{j})$.}
\end{array}
\right.
\end{eqnarray*}
It is straightforward to check that $u_j$ is a $(r_j-r_{j+1})^{-1}$-Lipschitz function on $\Omega$ supported in $B(x,r_{j})\cap\Omega$, where $\displaystyle (r_j-r_{j+1})^{-1}=2^{j+2}r^{-1}$, and that $$g_j:=2^{j+2}r^{-1}\chi_{B(x,r_{j})\cap\Omega}\in \mathcal{D}_\Omega (u_j).$$
In particular, we have that $u_j\in M^{1,p}(\Omega)$ with
\begin{equation*}\label{xu-12-X}
\|g_j\|_{L^{p}(\Omega)}\leq 
r^{-1}\,2^{j+2}\lf[\mu(B(x,r_j)\cap\Omega)\r]^{1/p}
\end{equation*}
and
\begin{equation*}\label{xu-13-X}
\Vert u_j\Vert_{L^{p}(\Omega)}\leq \lf[\mu(B(x,r_j)\cap\Omega)\r]^{1/p}.
\end{equation*}
From this, and the fact that $1\leq r^{-1}{\rm diam}(\Omega)$, we have
\begin{align}
\label{djq-476}
\|u_j\|_{{M}^{1,p}(\Omega)}&\leq\|g_j\|_{L^{p}(\Omega)}+\Vert u_j\Vert_{L^{p}(\Omega)}\nonumber\\
&\leq r^{-1}(2^{j+2}+{\rm diam}(\Omega))\,\lf[\mu(B(x,r_j)\cap\Omega)\r]^{1/p}\nonumber
\\
&\leq cr^{-1}2^{j+3}\,\lf[\mu(B(x,r_j)\cap\Omega)\r]^{1/p},
\end{align}
where $c:=\max\{1,{\rm diam}(\Omega)\}$.
Moreover, since $u_j\in M^{1,p}(\Omega)$, we have that $Eu_j\in M^{1,q}(X)$,
with
\begin{equation}
\label{embed}
\|Eu_j\|_{{M}^{1,q}(X)}\leq C\|u_j\|_{{M}^{1,p}(\Omega)}.
\end{equation}
To proceed, we assume for the moment that $q<Q$ and fix a ball $B_0:=B(x_0,R_0)$ large enough so that $\overline{\Omega}\subset B_0$. Then, 
by the local Sobolev inequality \eqref{GBLS},
\begin{equation}
\label{embed2}
\|Eu_j\|_{L^{q^*_Q}(B_0)}\leq C\|Eu_j\|_{{M}^{1,q}(2B_0)}
\leq C\|Eu_j\|_{{M}^{1,q}(X)},
\end{equation}
where the constant $C$ depends on $B_0$, which ultimately only depends on $\Omega$.
By combining \eqref{embed} and \eqref{embed2}, we conclude that
\begin{equation}
\label{embed3}
\|Eu_j\|_{L^{q^*_Q}(B_0)}\leq C\|u_j\|_{{M}^{1,p}(\Omega)}.
\end{equation}
Since $u_j\equiv1$ on the set $B(x,r_{j+1})\cap\Omega\subset B_0$, and ${Eu_j}|_\Omega=u_j$, one deduces that
\begin{equation}\label{xu-14-X}
\|Eu_j\|_{L^{q^*_Q}(B_0)}\ge \Vert u_j\Vert_{L^{q^\ast_Q}(B(x,r_{j+1})\cap\Omega)}=\lf[\mu(B(x,r_{j+1})\cap\Omega)\r]^{1/q^*_Q}.
\end{equation}
Combining \eqref{djq-476}, \eqref{embed3}, and \eqref{xu-14-X} gives
\begin{equation*}
\label{xu-15-X}
\lf[\mu(B(x,r_{j+1})\cap\Omega)\r]^{1/q^*_Q}
\leq C r^{-1}2^{j}\lf[\mu(B(x,r_j)\cap\Omega)\r]^{1/p},
\end{equation*}
for all $j\in\mathbb{N}$,
where $C\in(0,\infty)$ is independent of both $x$ and $r$. In view of \eqref{radii},
we can invoke Lemma~\ref{iteration} for the metric measure space $(\Omega,d,\mu)$ with $p:=p$, $t:=q^*_Q$, and $\theta:=C\,r^{-1}$, to conclude that
\begin{equation*}
\label{xu-16-X}
\mu(B(x,r)\cap\Omega)\geq\mu(B(x,r_1)\cap\Omega)\geq C
r^{\frac{1}{\frac{1}{p}-\frac{1}{q^*_Q}}}=Cr^{s_0},
\end{equation*}
where the first inequality follows from \eqref{radii}.
Hence, the lower measure condition \eqref{Lbound} holds with $s=s_0$. 
It is easy to check that this estimate also holds for every $s>s_0$, possibly after adjusting the constant $\theta$.

We now consider the case $q\geq Q$. Fix $s>s_0=p$, and define
\[
\kappa:=\frac{ps}{s-p}
\quad\mbox{and}\quad
q_0:=\frac{Q\kappa}{Q+\kappa}.
\]
Then
\[
0<q_0<Q\leq q<p<\kappa=\frac{q_0Q}{Q-q_0}=
(q_0)_Q^*.
\]
By Lemma~\ref{small q extension}, $\Omega$ has the $(M^{1,p},M^{1,q_0})$-extension property, where $0<q_0<Q$.
Granted this, we may now repeat the proof for the case $q<Q$, with $q_0$ in
place of $q$, and since
\[
\frac{1}{\frac{1}{p}-\frac{1}{(q_0)_Q^*}}
=
s,
\]
we obtain $\mu(B(x,r)\cap\Omega)\geq c_sr^s$
for every $x\in \Omega$ and every $0<r\leq {\rm diam}(\Omega)$.

To finish the proving the lemma,
let $x \in \overline{\Omega}$ and $0<r\leq {\rm diam}(\Omega)$. Choose \(y\in\Omega\) such that $d(x,y)<r/2$. Then $B(y,r/2)\subset B(x,r)$ and thus, by the estimate already proved for centers in $\overline{\Omega}$,
\begin{align*}
\mu(B(x,r)\cap\Omega)
&\geq \mu(B(y,r/2)\cap\Omega)\\
&\geq \theta\left(r/2\right)^s
=2^{-s}\theta r^s.
\end{align*}
Therefore, the estimate \eqref{Lbound} holds for every $x\in\overline{\Omega}$, after replacing $\theta$ by $2^{-s}\theta$.
\end{proof}

For a set $\Omega\subset X$ and a point $x\in\partial\Omega$, we define $A_{r_{1},r_{2},\Omega}$ by
\begin{equation*}
    A_{r_{1},r_{2},\Omega}:=\{y\in\Omega: r_{1}\leq d(x,y)<r_{2}\}=[B(x,r_2)\setminus B(x,r_1)]\cap\Omega,
\end{equation*}
where $r_{2}>r_{1}>0.$ Note that
\begin{equation}\label{ball-annulus-decomposition}
    B(x,r_2)\cap\Omega
    =
    \bigl(B(x,r_1)\cap\Omega\bigr)
    \cup
    A_{r_1,r_2,\Omega},
\end{equation}
where the union is disjoint.

\begin{lemma}\label{lemma1}
Let $(X,d,\mu)$ be a metric measure space, where $\mu$ is a $Q$-doubling measure for some $Q\in(0,\infty)$.  Suppose that  $\Omega\subset X$ is a bounded nonempty measurable set having the $(M^{1,p},M^{1,q})$-extension property for some $0<q<Q$ and $q<p<q^*_Q$, and let $\Phi$ be the set function from \eqref{setfun}. Fix $x\in \partial \Omega,$ and suppose that
   \begin{equation*}
   \label{eq: lim ratio to zero}
       \limsup_{r\to 0}\frac{\mu(B(x,r)\cap \Omega)}{\mu(B(x,r))}=0\quad\mbox{and}\quad\overline{D\Phi}(x)<\infty.
   \end{equation*}
  Then, for every $0<c<1<L$, there exists $r_{x,c,L}>0$ such that
   \begin{equation*}
       \min\{\mu(A_{r,Lr,\Omega}),\mu(B(x,r/2)\cap\Omega)\}\leq c\mu(B(x,r)\cap\Omega),
   \end{equation*}
   whenever $0<r<r_{x,c,L}.$
\end{lemma}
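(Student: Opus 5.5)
We argue by contradiction, combining the gradient estimate of Corollary~\ref{corollary} with the Sobolev--Poincar\'e inequality \eqref{local Sobolev Poincare}. Fix $0<c<1<L$ and let $r_x,C_x$ be as in Corollary~\ref{corollary} (available since $\overline{D\Phi}(x)<\infty$). We restrict to $r$ with $2Lr<\min\{r_x,1\}$, and suppose that for such an $r$ both
\[
\mu(A_{r,Lr,\Omega})>c\,\mu(B(x,r)\cap\Omega)
\quad\mbox{and}\quad
\mu(B(x,r/2)\cap\Omega)>c\,\mu(B(x,r)\cap\Omega).
\]
In particular $\mu(B(x,r)\cap\Omega)>0$; if it were zero, the claim would hold trivially. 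The aim is to show that this forces $\mu(B(x,r)\cap\Omega)/\mu(B(x,r))\geq\delta$ for a constant $\delta>0$ independent of $r$. Since the $\limsup$ hypothesis gives a ratio below $\delta$ for all small $r$, shrinking the admissible radii then yields $r_{x,c,L}$.

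\textbf{Test function and extension.} Let $u:=\min\{1,\max\{0,\,2-2d(x,\cdot)/r\}\}$ restricted to $\Omega$. This $u$ is $2/r$-Lipschitz, equals $1$ on $B(x,r/2)\cap\Omega$, and vanishes on $\Omega\setminus B(x,r)$, hence on $A_{r,Lr,\Omega}$. The function $g_u:=(2/r)\chi_{B(x,r)\cap\Omega}$ is a Haj\l{}asz gradient of $u$ on $\Omega$. Thus $u\in M^{1,p}_0(B(x,2Lr),\Omega)$ with
\[
\|u\|_{M^{1,p}_0(B(x,2Lr),\Omega)}\leq C r^{-1}\mu(B(x,r)\cap\Omega)^{1/p},
\]
using $r<1$. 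Corollary~\ref{corollary} applied on $B(x,2Lr)$ gives $v\in M^{1,q}(B(x,2Lr))$ with $v=u$ on $B(x,2Lr)\cap\Omega$, and $g\in\mathcal D_{B(x,2Lr)}(v)$ with
\[
\|g\|_{L^q(B(x,2Lr))}\leq C_x\,\mu(B(x,2Lr))^{1/k}\,r^{-1}\mu(B(x,r)\cap\Omega)^{1/p}.
\]

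\textbf{Lower bound from the two regions.} Apply \eqref{local Sobolev Poincare} with $B=B(x,Lr)$, so that $2B=B(x,2Lr)$, and $q^*=qQ/(Q-q)$. For any $\gamma\in\mathbb R$, either $|1-\gamma|\geq1/2$ or $|\gamma|\geq1/2$. Since $v=1$ on $B(x,r/2)\cap\Omega$ and $v=0$ on $A_{r,Lr,\Omega}$, both subsets of $B(x,Lr)$, we get
\[
\inf_\gamma\|v-\gamma\|_{L^{q^*}(B(x,Lr))}\geq \tfrac12\min\{\mu(B(x,r/2)\cap\Omega),\mu(A_{r,Lr,\Omega})\}^{1/q^*}>\tfrac12\bigl(c\,\mu(B(x,r)\cap\Omega)\bigr)^{1/q^*}.
\]

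\textbf{Comparing the bounds.} The right-hand side of \eqref{local Sobolev Poincare} is at most
\[
C\,Lr\,\mu(B(x,2Lr))^{-1/Q}\|g\|_{L^q(B(x,2Lr))}\leq C_{x,L}\,\mu(B(x,2Lr))^{1/k-1/Q}\mu(B(x,r)\cap\Omega)^{1/p}.
\]
The factors of $r$ cancel. Using the identity $\frac1{q^*}-\frac1p=\frac1q-\frac1Q-\frac1p=\frac1k-\frac1Q$, which is strictly positive precisely because $p<q^*_Q$, we obtain
\[
\Bigl(\frac{\mu(B(x,r)\cap\Omega)}{\mu(B(x,2Lr))}\Bigr)^{\frac1k-\frac1Q}\geq c_0>0,
\]
with $c_0$ depending only on $x,c,L$. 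Doubling gives $\mu(B(x,2Lr))\geq\mu(B(x,r))$, so $\mu(B(x,r)\cap\Omega)/\mu(B(x,r))\geq\delta:=c_0^{1/(1/k-1/Q)}$. This contradicts the $\limsup=0$ hypothesis once $r$ is small.

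The main point requiring care is choosing the test function and Poincar\'e ball so that the two competing sets lie in the same ball, $v$ takes two separated constant values on them, and all powers of $r$ cancel. The rest consists of verifying that $u$ lies in $M_0^{1,p}(B(x,2Lr),\Omega)$ with the stated norm bound, and keeping $2Lr<r_x$.
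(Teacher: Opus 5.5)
Your argument is essentially the paper's proof, recast as a contradiction. It uses the same cut-off $u$, Corollary~\ref{corollary} on $B(x,2Lr)$, the Sobolev--Poincar\'e inequality \eqref{local Sobolev Poincare} on $B(x,Lr)$ with the two-level lower bound, and the same cancellation of the powers of $r$. The paper instead fixes $\delta$ and bounds the minimum directly by $C\delta^{q^*_Q/p-1}\mu(B(x,r)\cap\Omega)$.

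There is, however, a sign error in your last step. The identity $\frac{1}{q^*_Q}-\frac1p=\frac1k-\frac1Q$ is correct, but $p<q^*_Q$ makes this number strictly negative, not positive. Dividing your two bounds by $\mu(B(x,r)\cap\Omega)^{1/p}$ actually gives $\mu(B(x,r)\cap\Omega)^{\frac1{q^*_Q}-\frac1p}<C\,\mu(B(x,2Lr))^{\frac1{q^*_Q}-\frac1p}$, that is,
\[
\Bigl(\frac{\mu(B(x,r)\cap\Omega)}{\mu(B(x,2Lr))}\Bigr)^{\frac1p-\frac1{q^*_Q}}>c_0,
\qquad \frac1p-\frac1{q^*_Q}=\frac1Q-\frac1k>0.
\]
Your display, read literally with the negative exponent $\frac1k-\frac1Q$, would bound this ratio from above rather than below and would give no contradiction.

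Correct the exponent and set $\delta:=c_0^{1/(\frac1p-\frac1{q^*_Q})}$; the rest of your argument then goes through. The comparison $\mu(B(x,2Lr))\geq\mu(B(x,r))$ needs only monotonicity, not doubling. The positivity of $\frac1p-\frac1{q^*_Q}$ is exactly where the hypothesis $p<q^*_Q$ enters, as in the paper.
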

\begin{proof}
Suppose $x\in \partial \Omega$ is as in the statement of the present lemma and fix $0<c<1<L$.
Given any $\delta>0$, our assumptions give the existence of $0< r_{x,\delta}<1$ such that
\begin{equation}\label{besti}
    \mu(B(x,r)\cap \Omega)\leq \delta\mu(B(x,r)),
\end{equation}
whenever $0<r<r_{x,\delta}.$

Consider  the function $u:\Omega\to[0,1]$ defined by setting for each $y\in\Omega$,
\begin{eqnarray*}
u(y):=
\left\{
\begin{array}{ll}
\,\qquad 1\quad &\mbox{if $y\in B(x,r/2)\cap\Omega$,}
\\[6pt]
\displaystyle \frac{2r-2d(x,y)}{r}
&\mbox{if $y\in [B(x,r)\cap\Omega]\setminus B(x,r/2)$,}
\\[6pt]
\,\qquad 0 &\mbox{if $y\in \Omega\setminus B(x,r)$.}
\end{array}
\right.
\end{eqnarray*}
Then $u$ is $\frac{2}{r}$-Lipschitz and hence the function
\begin{equation*}
 g_u:=  \frac{2}{r} \chi_{B(x,r)\cap \Om}
\end{equation*}
is a Haj{\l}asz gradient of $u$. In particular, $u\in M^{1,p}_{0}(B(x,2Lr),\Omega)$, and when $r<1$,
\begin{align}
\label{eq: u estimate}
\|u\|_{M^{1,p}_{0}(B(x,2Lr),\Omega)}&\leq C
\|u\|_{L^{p}(B(x,r)\cap\Omega)}
+\|g_u\|_{L^{p}(B(x,r)\cap\Omega)}\notag\\
&\leq C\mu(B(x,r)\cap\Omega)^{\frac{1}{p}}+Cr^{-1}\mu(B(x,r)\cap\Omega)^{\frac{1}{p}}\notag\\
&\leq Cr^{-1}\mu(B(x,r)\cap\Omega)^{\frac{1}{p}}.
\end{align}
Since $\overline{D\Phi}(x)<\infty,$ Corollary~\ref{corollary} implies that there exist $r_{x}>0$ and $C_x>0$ such that, for every $0<2Lr<r_{x},$ there exist $v\in M^{1,q}(B(x,2Lr))$ and $g_{v}\in D_{B(x,2Lr)}(v)$ such that $v|_{B(x,2Lr)\cap\Omega}= u$ and
\begin{equation}\label{puz}
    \|g_{v}\|_{L^q(B(x,2Lr))}\leq C_{x}\mu(B(x,2Lr))^{\frac{1}{q} -\frac{1}{p}}\|u\|_{M^{1,p}_0(B(x,2Lr),\Omega)}.
\end{equation}
By the definition of $u$ we have that $v=1$ on $B(x,r/2)\cap \Omega$ and $v=0$ on $A_{r,Lr,\Omega}$. Hence, for every $\gamma\in\mathbb{R}$, we have
$$
\|v-\gamma\|_{L^{q^*_Q}(B(x,Lr))}\geq
\max\left\{|\gamma|\,\mu(A_{r,Lr,\Omega})^{\frac{1}{q^*_Q}}, |1-\gamma|\,\mu(B(x,r/2)\cap \Omega)^{\frac{1}{q^*_Q}}\right\}.
$$
Since $\max\{|\gamma|,|1-\gamma|\}\geq\frac{1}{2}$, we have
\begin{equation}
\label{eq: v est B2Lr}    
\inf_{\gamma\in\mathbb{R}}\|v-\gamma\|_{L^{q^*_Q}(B(x,Lr))}
\geq\frac{1}{2}\min\left\{\mu(A_{r,Lr,\Omega})^{\frac{1}{q^*_Q}},\mu(B(x,r/2)\cap \Omega)^{\frac{1}{q^*_Q}}\right\}.
\end{equation}
Using the Sobolev-Poincar\'e inequality \eqref{local Sobolev Poincare}, \eqref{eq: v est B2Lr}, \eqref{puz},
\eqref{eq: u estimate}, \eqref{besti}, and the fact that $p<q^*_Q$, we deduce that
\begin{align*}
\min\left\{\mu(A_{r,Lr,\Omega})^{\frac{1}{q^*_Q}},\mu(B(x,r/2)\cap \Omega)^{\frac{1}{q^*_Q}}\right\}&
\leq C\frac{r}{\mu(B(x,2Lr))^\frac{1}{Q}}\,\|g_{v}\|_{L^q(B(x,2Lr))}\\
&\leq Cr\mu(B(x,2Lr))^{\frac{1}{q_Q^*}-\frac{1}{p}}\,\|u\|_{M^{1,p}_{0}(B(x,2Lr),\Omega)}\\
&\leq C\mu(B(x,2Lr))^{\frac{1}{q_Q^*}-\frac{1}{p}}
\mu(B(x,r)\cap\Omega)^{\frac{1}{p}}\\
&\leq C\mu(B(x,r))^{\frac{1}{q_Q^*}-\frac{1}{p}}
\mu(B(x,r)\cap\Omega)^{\frac{1}{p}}\\
&=C\left(\frac{\mu(B(x,r)\cap\Omega)}{\mu(B(x,r))}\right)^{\frac{1}{p}-\frac{1}{q_Q^*}}\mu(B(x,r)\cap\Omega)^\frac{1}{q_Q^*}
\\
&\leq C\delta^{\frac{1}{p}-\frac{1}{q_Q^*}}\mu(B(x,r)\cap\Omega)^\frac{1}{q_Q^*}
\end{align*}
Hence,
$$
\min\{\mu(A_{r,Lr,\Omega}),\mu(B(x,r/2)\cap \Omega)\}\leq C\delta^{\frac{q^*_Q}{p}-1}\mu(B(x,r)\cap\Omega).
$$
Since $\frac{q^*_Q}{p}-1>0$, we can choose $\delta>0$ such that $\delta^{\frac{q^*_Q}{p}-1}\leq\frac{c}{C},$ and the desired estimate follows.
\end{proof}


To obtain the upper volume bound, we employ the following lemma.
\begin{lemma}\label{upper}
Let $(X,d,\mu)$ be a metric measure space, where $\mu$ is a $Q$-doubling measure for some $Q\in(0,\infty)$. 
Suppose that  $\Omega\subset X$ is a bounded nonempty measurable set having the $(M^{1,p},M^{1,q})$-extension property for some $0<q<Q$ and $q<p<q^*_Q$, and let $\Phi$ be the set function from \eqref{setfun}. Fix $x\in \partial \Omega,$ and suppose that
\begin{equation*}
\limsup_{r\to 0}\frac{\mu(B(x,r)\cap \Omega)}{\mu(B(x,r))}=0\quad\mbox{and}\quad\overline{D\Phi}(x)<\infty.
\end{equation*}
Assume further that $\mu$ satisfies the following local reverse-doubling condition  at $x$:\ there exist constants $C_{\mathrm{rd}}\geq1$, $\alpha>0$, and $r_{\mathrm{rd}}>0$
such that
\begin{equation}\label{local-reverse-doubling at x}
    \mu(B(x,r))
    \leq
    C_{\mathrm{rd}}
    \left(\frac{r}{R}\right)^\alpha
    \mu(B(x,R))
\end{equation}
whenever $0<r\leq R<r_{\mathrm{rd}}.$
Then, for every $0<c<1$, there exists $r_{x,c}>0$ such that
\begin{equation}
\label{upper-decay-conclusion}
\mu(B(x,r/2)\cap\Omega)\leq c\mu(B(x,r)\cap\Omega),
\end{equation}
whenever $0<r<r_{x,c}.$
\end{lemma}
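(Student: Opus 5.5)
The plan is to reduce the statement to the dichotomy of Lemma~\ref{lemma1} and to use the local reverse-doubling condition \eqref{local-reverse-doubling at x} to rule out the ``annulus'' alternative at all sufficiently small scales. Fix $0<c<1$ and a parameter $L>1$, to be chosen depending only on $c$, $p$, $q$, $Q$, $\alpha$, $C_{\mathrm{rd}}$ and the doubling constants. Lemma~\ref{lemma1} gives $r_{x,c',L}>0$ such that, for $0<r<r_{x,c',L}$, either
\begin{equation*}
\mu(B(x,r/2)\cap\Omega)\le c'\mu(B(x,r)\cap\Omega)
\quad\text{or}\quad
\mu(A_{r,Lr,\Omega})\le c'\mu(B(x,r)\cap\Omega),
\end{equation*}
where $c'\le c$ is an auxiliary constant. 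The first alternative is exactly \eqref{upper-decay-conclusion}. So the whole task is to show that the second alternative, when it occurs while the first fails, leads to a contradiction for small $r$.

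To handle the annulus alternative, I would go back to the quantitative chain in the proof of Lemma~\ref{lemma1} and keep track of $L$ and of the ratio $\mu(B(x,r))/\mu(B(x,2Lr))$ instead of absorbing them into constants. With the same test function $u$ and the same extension $v$ from Corollary~\ref{corollary}, the Sobolev--Poincar\'e inequality \eqref{local Sobolev Poincare} on $B(x,Lr)$ yields
\begin{equation*}
\min\Bigl\{\mu(A_{r,Lr,\Omega}),\mu(B(x,r/2)\cap\Omega)\Bigr\}^{\frac{1}{q^*_Q}}
\le C L\,\Bigl(\frac{\mu(B(x,r)\cap\Omega)}{\mu(B(x,2Lr))}\Bigr)^{\frac1p-\frac1{q^*_Q}}\mu(B(x,r)\cap\Omega)^{\frac1{q^*_Q}} .
\end{equation*}
By \eqref{local-reverse-doubling at x} we have $\mu(B(x,2Lr))\ge C_{\mathrm{rd}}^{-1}(2L)^{\alpha}\mu(B(x,r))$, and the density hypothesis makes $\mu(B(x,r)\cap\Omega)/\mu(B(x,r))\le\delta$. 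Hence the right-hand side is at most $C L^{1-\alpha(\frac1p-\frac1{q^*_Q})}\delta^{\frac1p-\frac1{q^*_Q}}$ times $\mu(B(x,r)\cap\Omega)^{1/q^*_Q}$. For a \emph{fixed} $L$, taking $\delta$ small makes this arbitrarily small.

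The key step is then an iteration over scales. Suppose the annulus alternative holds at scale $r$, but $\mu(B(x,r/2)\cap\Omega)>c\,\mu(B(x,r)\cap\Omega)$. Then
\begin{equation*}
\mu(B(x,Lr)\cap\Omega)\le(1+c')\mu(B(x,r)\cap\Omega)<\frac{1+c'}{c}\,\mu(B(x,r/2)\cap\Omega).
\end{equation*}
So $\Omega$ carries almost all of its mass in $B(x,Lr)$ inside $B(x,r/2)$. Now apply the same estimate at the larger scale $\rho=Lr/2$ (or at $\rho\in[r,Lr/2]$), with a test function equal to $1$ on $B(x,\rho/2)\cap\Omega\supset B(x,r/2)\cap\Omega$ and $0$ on $A_{\rho,L\rho,\Omega}$. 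At that scale the first alternative fails badly, because $B(x,\rho/2)\cap\Omega$ already contains a fixed fraction of $B(x,\rho)\cap\Omega$. Therefore the annulus alternative must hold again at $\rho$. Chaining these scales upward, the mass of $\Omega$ in $B(x,L^m r)$ stays comparable to $\mu(B(x,r/2)\cap\Omega)$. At the same time, reverse doubling forces $\mu(B(x,L^m r))\gtrsim L^{m\alpha}\mu(B(x,r))$. Feeding this growing denominator into the displayed inequality at the top scale, where $\mu(B(x,\cdot)\cap\Omega)$ is still bounded below by $c\,\mu(B(x,r)\cap\Omega)$, should contradict the reverse inequality once $m$ is large but $L^m r$ is still below $\min\{r_x,r_{\mathrm{rd}},r_{x,\delta}\}$. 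This fixes the choice of $r_{x,c}$.

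The main obstacle I expect is exactly this bookkeeping. The factor $L$ coming from $\diam B(x,Lr)$ competes with the gain $L^{-\alpha(1/p-1/q^*_Q)}$ from reverse doubling, and $\alpha$ may be small. So the contradiction must come from the smallness of $\delta$ (via the density hypothesis) and from the iteration, not from a single choice of $L$. If the iteration becomes unwieldy, I would first try the simplest choice $L=2$: use Lemma~\ref{lemma1} at each dyadic scale and exploit $\mu(B(x,2^m r))\ge C_{\mathrm{rd}}^{-1}2^{m\alpha}\mu(B(x,r))$ directly.
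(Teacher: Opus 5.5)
There is a genuine gap in the step that is supposed to produce the contradiction. Your reduction to Lemma~\ref{lemma1} and the upward chaining of the annulus alternative are on the right track; the paper does exactly this, with $L=4$ and $r_j=2^jr$.

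The problem is at the top of your chain, where the annulus alternative holds. There the minimum on the left of your displayed estimate is $\mu(A_{\rho_m,L\rho_m,\Omega})^{1/q^*_Q}$, and you have no lower bound for it. The lower bound $\mu(B(x,\rho_m/2)\cap\Omega)>c\,\mu(B(x,r)\cap\Omega)$ that you invoke concerns the other term of the minimum. So the estimate at the top scale merely restates the annulus alternative, and no choice of $m$, $L$ or $\delta$ makes it contradictory.

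The facts you collect are that the $\Omega$-mass grows at most like $(1+c')^m$, while $\mu(B(x,\cdot))$ grows at least like $L^{m\alpha}$. These only say that the density of $\Omega$ is much smaller at the top of the chain than at scale $r$. That is perfectly compatible with the hypothesis that this density tends to zero.

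The missing idea is an \emph{anchor}: a positive amount of $\Omega$-mass at a fixed scale that does not depend on $r$. The paper takes one bad radius $\rho$ and uses $\mu(\{x\}\cap\Omega)=0$ to find $R_0=2^{-m-1}\rho$ with $\sigma:=\mu(A_{R_0,2R_0,\Omega})>0$. Then, for any bad $r<R_0/2$, it runs the chain up to $r_N\in[R_0/2,R_0)$ and obtains
\[
0<\sigma\le\mu(A_{r_N,4r_N,\Omega})\le\tau(1+\tau)^N\mu(B(x,r)\cap\Omega)\le\tau C_{\mathrm{rd}}\Bigl(\frac{1+\tau}{2^{\alpha}}\Bigr)^{N}\mu(B(x,R_0)).
\]
This is absurd as $r\to0$ along bad radii, since then $N\to\infty$.

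Two points your plan does not isolate are needed here. First, the auxiliary constant must satisfy $1+\tau<2^\alpha$. Second, reverse doubling is used only in this final comparison of measures, not inside the Sobolev--Poincar\'e chain. So Lemma~\ref{lemma1} can be used as a black box with $L=4$, and the competition between $L$ and $L^{-\alpha(1/p-1/q^*_Q)}$ that you worry about never arises. Equivalently, in a direct argument $r_{x,c}$ has to be chosen after fixing such an $R_0$, depending on $\sigma/\mu(B(x,R_0))$. It cannot be obtained from the thresholds $r_x$, $r_{\mathrm{rd}}$, $r_{x,\delta}$ and the structural constants alone.

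A smaller point: to propagate the annulus alternative, compare with the preceding scale rather than with the fixed ball $B(x,r/2)$. If $\mu(A_{r_j,4r_j,\Omega})\le\tau\mu(B(x,r_j)\cap\Omega)$, then $\mu(B(x,r_{j+1})\cap\Omega)\le(1+\tau)\mu(B(x,r_j)\cap\Omega)$. The inner-ball alternative at $r_{j+1}$ would then give $\mu(B(x,r_j)\cap\Omega)\le\tau(1+\tau)\mu(B(x,r_j)\cap\Omega)$, which is impossible when $\tau(1+\tau)<1$. If you compare with $B(x,r/2)$ as you propose, the guaranteed fraction shrinks by a factor $1+c'$ at each step and eventually drops below $c'$. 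Relatedly, the $\Omega$-mass along the chain does not stay comparable to $\mu(B(x,r/2)\cap\Omega)$; it can grow like $(1+c')^m$, which is harmless only because the reverse-doubling growth of $\mu(B(x,\cdot))$ beats it.
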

\begin{proof}
First, choose a constant $\tau>0$ sufficiently small so that
$\tau<c$, $\tau(1+\tau)<1$, and $1+\tau<2^\alpha$.
By Lemma~\ref{lemma1} with $L=4$ and $\tau$ in place of $c$, there exists $\widetilde{r}_{x,\tau}>0$ such that
\begin{equation}\label{uslemma}
\min\{\mu(A_{t,4t,\Omega}),\mu(B(x,t/2)\cap\Omega)\}\leq \tau\mu(B(x,t)\cap\Omega),
\end{equation}
whenever $0<t<\widetilde{r}_{x,\tau}.$

Observe that
\begin{equation}
\label{eq: singleton measure zero}
\mu(\{x\}\cap\Omega)=0.
\end{equation}
Indeed, by our assumptions, for $0<t<1$, we have
$$
\mu(B(x,t)\cap\Omega)=\frac{\mu(B(x,t)\cap\Omega)}{\mu(B(x,t))}\mu(B(x,t))\leq\frac{\mu(B(x,t)\cap\Omega)}{\mu(B(x,t))}\mu(B(x,1))\to0
$$
as $t\to 0$. Hence,
$$
\mu(\{x\}\cap\Omega)=\lim_{t\to0}\mu(B(x,t)\cap\Omega)=0,
$$
and \eqref{eq: singleton measure zero} follows.

Now, suppose to the contrary that there exist arbitrarily small radii $r>0$ satisfying 
\begin{equation}
\label{eq: contra}
\mu(B(x,r/2)\cap\Omega)>c\mu(B(x,r)\cap\Omega).
\end{equation}
In particular, observe that we necessarily have that $\mu(B(x,r)\cap\Omega)>0$ for any radius satisfying \eqref{eq: contra}. Choose a radius $0<\rho<\min\{\widetilde{r}_{x,\tau},r_{\mathrm{rd}}\}$ satisfying \eqref{eq: contra}.
Observe that we can write
$$
0<\mu(B(x,\rho)\cap\Omega)=\mu\bigg(\bigl(\{x\}\cap\Omega\bigr)\cup\bigcup_{m=0}^\infty A_{2^{-m-1}\rho,2^{-m}\rho,\Omega}\bigg)
=\mu\bigg(\bigcup_{m=0}^\infty A_{2^{-m-1}\rho,2^{-m}\rho,\Omega}\bigg).
$$
Thus, there exists an integer $m\geq0$ such that
\begin{equation}
\label{eq: choice of sigma}
\sigma:=\mu(A_{2^{-m-1}\rho,2^{-m}\rho,\Omega})>0.
\end{equation}
Set $R_0:=2^{-m-1}\rho$ and note that $R_0<\min\{\widetilde{r}_{x,\tau},r_{\mathrm{rd}}\}$.
Suppose that $0<r<R_0/2$ and that \eqref{eq: contra} holds. Since $\tau<c$,
inequality \eqref{eq: contra} gives
$$
\mu(B(x,r/2)\cap\Omega)>\tau\mu(B(x,r)\cap\Omega),
$$
which together with \eqref{uslemma} and $\mu(B(x,r)\cap\Omega)>0$, implies
that
\begin{equation}\label{eq: annuli estimate 1}
\mu(A_{r,4r,\Omega})=\min\{\mu(A_{r,4r,\Omega}),\mu(B(x,r/2)\cap\Omega)\}\leq\tau\mu(B(x,r)\cap\Omega).
\end{equation}

We now iterate this process. For each $j=0,1,2,\dots$, set $r_j:=2^jr$, and let $N=N(r)$ be the unique nonnegative integer satisfying
\begin{equation}\label{choice-of-N}
\frac{R_0}{2}\leq r_N=2^Nr<R_0.
\end{equation}
We claim that, for every $j=0,\ldots,N$, 
\begin{equation}
\label{eq: induction annuli estimte}
\mu(A_{r_j,4r_j,\Omega})\leq \tau\mu(B(x,r_j)\cap\Omega)
\end{equation}
and
\begin{equation}
\label{eq: induction estimte}
\mu(B(x,r_j)\cap\Omega)
\leq(1+\tau)^j\mu(B(x,r)\cap\Omega).
\end{equation}
Proceeding  inductively, observe that for $j=0$, \eqref{eq: induction estimte} is immediate, while \eqref{eq: induction annuli estimte} follows from \eqref{eq: annuli estimate 1}. Suppose now that \eqref{eq: induction annuli estimte} and \eqref{eq: induction estimte} hold for some integer $0\leq j<N$. By \eqref{ball-annulus-decomposition},
\begin{equation}
\label{eq: induction estimate chain}
\begin{split}
\mu(B(x,r_{j+1})\cap\Omega)&=
\mu(B(x,r_j)\cap\Omega)+\mu(A_{r_j,2r_j,\Omega})
\\
&\leq\mu(B(x,r_j)\cap\Omega)+\mu(A_{r_j,4r_j,\Omega})
\\
&\leq(1+\tau)\mu(B(x,r_j)\cap\Omega)
\\
&\leq(1+\tau)^{j+1}\mu(B(x,r)\cap\Omega).
\end{split}
\end{equation}
Thus, \eqref{eq: induction estimte} holds for $j+1$.

To prove that \eqref{eq: induction annuli estimte} holds for $j+1$, we apply \eqref{uslemma} using the radius $r_{j+1}<R_0<\widetilde{r}_{x,\tau}$ to obtain
\begin{equation}\label{eq: annuli estimate 2}
\min\{\mu(A_{r_{j+1},4r_{j+1},\Omega}),\mu(B(x,r_{j})\cap\Omega)\}\leq \tau\mu(B(x,r_{j+1})\cap\Omega),
\end{equation}
where we have also used the fact that $r_{j+1}/2=r_{j}$.
We claim that this minimum is equal to $\mu(A_{r_{j+1},4r_{j+1},\Omega})$.
Indeed, if not, then by \eqref{eq: induction estimate chain} and the fact that $\tau(1+\tau)<1$, we have
$$
\mu(B(x,r_j)\cap\Omega)\leq\tau\mu(B(x,r_{j+1})\cap\Omega)
\leq\tau(1+\tau)\mu(B(x,r_j)\cap\Omega)<
\mu(B(x,r_j)\cap\Omega),
$$
which is not possible since $\mu(B(x,r_j)\cap\Omega)\geq\mu(B(x,r)\cap\Omega)>0.$ 
Hence, the minimum in \eqref{eq: annuli estimate 2} equals 
$\mu(A_{r_{j+1},4r_{j+1},\Omega})$ and thus, \eqref{eq: induction annuli estimte} holds for $j+1$. The proof of \eqref{eq: induction annuli estimte} and \eqref{eq: induction estimte} is now complete.

Next, observe that if $y\in A_{R_0,2R_0,\Omega}$, then
$r_N<R_0\leq d(x,y)<2R_0\leq 4r_N$. Hence,
\begin{equation*}
\label{eq: annuli inclusion}
A_{R_0,2R_0,\Omega}\subset A_{r_N,4r_N,\Omega}.
\end{equation*}
Now, combining this with \eqref{eq: choice of sigma}, \eqref{eq: induction annuli estimte}, and \eqref{eq: induction estimte} yields
\begin{equation}
\begin{split}
\label{eq: sigma estimate}
0<\sigma&=\mu(A_{R_0,2R_0,\Omega})
\leq\mu(A_{r_N,4r_N,\Omega})
\\
&\leq\tau\mu(B(x,r_N)\cap\Omega)
\leq\tau(1+\tau)^N\mu(B(x,r)\cap\Omega).
\end{split}
\end{equation}
On the other hand, since $r_N=2^Nr$ and
$r_N<R_0<r_{\mathrm{rd}}$, the local reverse-doubling condition \eqref{local-reverse-doubling at x} gives
\begin{equation*}\label{reverse-doubling-application}
\begin{aligned}
\mu(B(x,r)\cap\Omega)
&\leq
\mu(B(x,r))\\
&\leq
C_{\mathrm{rd}}2^{-N\alpha}\mu(B(x,r_N))\\
&\leq
C_{\mathrm{rd}}2^{-N\alpha}\mu(B(x,R_0)).
\end{aligned}
\end{equation*}
Combining this with \eqref{eq: sigma estimate} yields
\begin{equation}
\label{eq: last contradiction}
0<\sigma\leq\tau C_{\mathrm{rd}}\mu(B(x,R_0))\left(\frac{1+\tau}{2^\alpha}\right)^N,
\end{equation}
where $\frac{1+\tau}{2^\alpha}<1$. Since the integer $N=N(r)$ defined by \eqref{choice-of-N} tends to infinity along any sequence of radii r satisfying \eqref{eq: contra} and tending to zero, the right-hand side of \eqref{eq: last contradiction} tends to zero, yielding a contradiction. Therefore, \eqref{eq: contra} cannot hold for
arbitrarily small radii and hence there exists $r_{x,c}>0$ such that the desired estimate \eqref{upper-decay-conclusion} holds
whenever $0<r<r_{x,c}.$
\end{proof}

\begin{lemma}\label{boundary-reverse-doubling}
Let $(X,d,\mu)$ be a doubling metric measure space, and let
$\Omega\subset X$ be a nonempty open connected set. Then $\mu$ is
locally reverse doubling at every point in $\partial\Omega$. More precisely,
there exist constants $C_{\mathrm{rd}}\geq 1$ and $\alpha>0$, depending
only on the doubling constant of $\mu$, such that for every
$x\in\partial\Omega$, there exists $r_x>0$ for which
\begin{equation*}\label{eq:boundary-local-reverse-doubling}
    \mu(B(x,r))
    \leq
    C_{\mathrm{rd}}
    \left(\frac{r}{R}\right)^\alpha
    \mu(B(x,R))
\end{equation*}
whenever $0<r\leq R<r_x$.
\end{lemma}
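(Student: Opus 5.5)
The standard route to reverse doubling in doubling spaces is the "annulus point" argument. If a ball $B(x,R)$ has a point $y$ with $d(x,y)$ comparable to $R$, say $R/2\le d(x,y)\le 3R/4$, then $B(y,R/4)\subset B(x,R)\setminus B(x,R/4)$. Doubling then gives $\mu(B(y,R/4))\ge c\,\mu(B(x,R))$, where $c$ depends only on $C_{\mathrm{doub}}$. Indeed, $B(x,R)\subset B(y,2R)$, so $\mu(B(x,R))\le C_{\mathrm{doub}}^3\mu(B(y,R/4))$. Consequently
\begin{equation*}
\mu(B(x,R/4))\le \mu(B(x,R))-\mu(B(y,R/4))\le (1-c)\,\mu(B(x,R)).
\end{equation*}
Iterating this over the scales $R,R/4,R/16,\dots$ and using doubling to interpolate between consecutive scales yields $\mu(B(x,r))\le C_{\mathrm{rd}}(r/R)^\alpha\mu(B(x,R))$ with $\alpha=\log_4(1/(1-c))$ and $C_{\mathrm{rd}}$ depending only on $C_{\mathrm{doub}}$. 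The whole task therefore reduces to producing such a point $y$ at every scale $R<r_x$.

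To get these points I will use connectedness of $\Omega$. Fix $x\in\partial\Omega$. Since $\Omega$ is nonempty and open and $x\in\partial\Omega$, we have $x\notin\Omega$ (the boundary of an open set is disjoint from it), so $\Omega\neq\{x\}$. Pick any point $z_0\in\Omega$ and set $r_x:=d(x,z_0)>0$; if desired, shrink $r_x$ by a constant factor. For $0<R<r_x$, since $x\in\overline\Omega$ there is $w\in\Omega$ with $d(x,w)<R/2$. The function $f(z):=d(x,z)$ is continuous on the connected set $\Omega$, so $f(\Omega)$ is an interval. This interval contains $f(w)<R/2$ and $f(z_0)=r_x>R$, hence it contains $[R/2,3R/4]$. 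This gives $y\in\Omega\subset X$ with $d(x,y)\in[R/2,3R/4]$, exactly the point needed above. Only connectedness of $\Omega$ is used, not of $X$, and no hypothesis beyond the statement enters.

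The remaining work is bookkeeping in the iteration. Given $0<r\le R<r_x$, let $k\ge0$ be the integer with $4^{-k-1}R<r\le 4^{-k}R$. Each radius $4^{-i}R$ with $i\le k$ is less than $r_x$, so the one-step estimate applies at each of these scales and gives $\mu(B(x,4^{-k}R))\le (1-c)^k\mu(B(x,R))$. Then $\mu(B(x,r))\le \mu(B(x,4^{-k}R))\le (1-c)^k\mu(B(x,R))$. Since $(1-c)^k=4^{-k\alpha}\le 4^{\alpha}(r/R)^\alpha$, we obtain $C_{\mathrm{rd}}=4^\alpha$. The main point needing care is the uniformity of the constants. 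The constant $c$ comes purely from doubling, via the containments $B(y,R/4)\subset B(x,R)\setminus B(x,R/4)$ (by the triangle inequality, using $d(x,y)\in[R/2,3R/4]$) and $B(x,R)\subset B(y,2R)$. So $C_{\mathrm{rd}}$ and $\alpha$ depend only on $C_{\mathrm{doub}}$, while only $r_x$ depends on $x$, as the statement requires.
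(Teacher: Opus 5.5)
Your proof is correct and follows essentially the same route as the paper: the intermediate value theorem for $d(x,\cdot)$ on the connected set $\Omega$ gives a point at distance comparable to $R$. Doubling then shows that a disjoint ball around that point, lying inside the annulus, carries a fixed fraction of $\mu(B(x,R))$, and the resulting one-step decay is iterated over geometric scales. The differences are only cosmetic: the paper uses the exact distance $3R/4$, the ball $B(z_R,R/8)\subset B(x,R)\setminus B(x,R/2)$, and dyadic rather than $4$-adic scales, which gives $\theta=1-C_D^{-4}$ in place of your $1-C_{\mathrm{doub}}^{-3}$.
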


\begin{proof}
Fix $x\in\partial\Omega$. Since $\Omega$ is open, $x\notin\Omega$.
Choose $y_0\in\Omega$ and set $r_x:=d(x,y_0)>0$.
The function $y\mapsto d(x,y)$
is continuous on the connected set $\Omega$, and hence its image is an interval containing $r_x$. Moreover, since $x\in\partial\Omega$,
we have
$$
\inf_{y\in\Omega}d(x,y)=0,
$$
and hence the image of this map contains the interval $(0,r_x]$. As such, for every
$0<R<r_x$, there exists $z_R\in\Omega$ such that
$d(x,z_R)=3R/4$.
Observe that
\begin{equation}\label{eq:ball-in-boundary-annulus}
    B\left(z_R,R/8\right)
    \subset
    B(x,R)\setminus B(x,R/2).
\end{equation}

Let $C_D>1$ denote the doubling constant of $\mu$. Since
$B(x,R)\subset B(z_R,2R)$,
the doubling property gives
\[
    \mu(B(x,R))
    \leq
    \mu(B(z_R,2R))
    \leq
    C_D^4
    \mu\left(B\left(z_R,R/8\right)\right).
\]
Hence,
\[
    \mu\left(B\left(z_R,R/8\right)\right)
    \geq
    C_D^{-4}\mu(B(x,R)).
\]
Using \eqref{eq:ball-in-boundary-annulus}, we conclude that
\[
\begin{aligned}
    \mu(B(x,R/2))
    &\leq
    \mu(B(x,R))
    -
    \mu\left(B\left(z_R,R/8\right)\right)\\
    &\leq
    \left(1-C_D^{-4}\right)\mu(B(x,R)).
\end{aligned}
\]
Thus, with $\theta:=1-C_D^{-4}\in(0,1),$
we have
\begin{equation}\label{eq:one-step-boundary-reverse-doubling}
    \mu(B(x,R/2))
    \leq
    \theta\mu(B(x,R))
\end{equation}
for every $0<R<r_x$.
Iterating \eqref{eq:one-step-boundary-reverse-doubling} gives
\begin{equation}\label{eq:dyadic-reverse-doubling}
    \mu(B(x,2^{-j}R))
    \leq
    \theta^j\mu(B(x,R))
\end{equation}
for every $j\in\mathbb{N}_0$ and every $0<R<r_x$.

Let $\alpha:=\log_2\left(\theta^{-1}\right)>0.$
Given $0<r\leq R<r_x$, choose $j\in\mathbb{N}_0$ such that
$$2^{-j-1}R<r\leq2^{-j}R.$$
Then, by \eqref{eq:dyadic-reverse-doubling},
\[
\begin{aligned}
    \mu(B(x,r))
    &\leq
    \mu(B(x,2^{-j}R))\\
    &\leq
    \theta^j\mu(B(x,R))\\
    &\leq
    \theta^{-1}
    \left(\frac{r}{R}\right)^\alpha
    \mu(B(x,R)).
\end{aligned}
\]
Thus, $\mu$ satisfies the local reverse-doubling condition
\eqref{local-reverse-doubling at x} at every $x\in\partial\Omega$, and the  proof is complete.
\end{proof}

\section{Proofs of the main theorems}
\label{proofs of main results}
\subsection{Proof of Theorem~\ref{thmE.6}}
\begin{proof}
If $q\geq Q$, choose $0<q_0<Q$ sufficiently close to $Q$ so that $p<(q_0)^*_Q$. Then, by Lemma~\ref{small q extension}, 
$\Omega$ has the $(M^{1,p},M^{1,q_0})$-extension property. Thus, after replacing $q$ by $q_0$, we may assume $q<Q$.
Suppose, to the contrary, that $\mu(\partial \Omega)>0.$ 
Since $\mu$ is doubling and Borel regular, the Lebesgue differentiation theorem (see, e.g., \cite[Theorem~3.14]{AM15}) implies
that there exists a measurable set $E\subset\partial\Omega$ such that
$\mu(E)>0$  and
\begin{equation}\label{lbp}
    \lim_{r\to 0}\frac{\mu(\partial \Omega\cap B(x,r))}{\mu(B(x,r))}=1
\end{equation}
for all $x\in E.$
Since $\Omega$ is open, $\Omega\cap\partial\Omega=\varnothing$, and therefore
$$
B(x,r)\cap\Omega
\subset
B(x,r)\setminus\partial\Omega,
$$
from which we can deduce that
$$
0\leq\frac{\mu(B(x,r)\cap\Omega)}{\mu(B(x,r))}\leq
1-\frac{\mu(B(x,r)\cap\partial\Omega)}{\mu(B(x,r))},
$$
for each $r>0.$ Combining this with \eqref{lbp} gives
\begin{equation}\label{eq:Omega-density-zero-main}
    \lim_{r\to0^+}
    \frac{\mu(B(x,r)\cap\Omega)}
         {\mu(B(x,r))}
    =0
\end{equation}
for every $x\in E$.
On the other hand, by Lemma~\ref{lemmafun}, we have
$\overline D\Phi(x)<\infty$ for almost every $x\in X$. Since $\mu(E)>0$, we may choose
$x\in E$ such that
\begin{equation}\label{case 2(1)}
    \overline{D\Phi}(x)<\infty.
\end{equation} 

Now, let $0<c<2^{-s}$, 
where $s\in(0,\infty)$ is any exponent satisfying the conclusion of Lemma~\ref{lem: lower measure bound}.
In view of \eqref{eq:Omega-density-zero-main}, \eqref{case 2(1)}, and Lemma~\ref{boundary-reverse-doubling}, we can appeal to Lemma~\ref{upper}, to conclude that there exists $r_{x,c}>0$ such that
\begin{equation}\label{4.3}
    \mu(B(x,r/2)\cap \Omega)\leq c\mu(B(x,r)\cap\Omega)
\end{equation}
for all $0<r<r_{x,c}$. 
Fix $0<r<\min\{\diam(\Omega),r_{x,c}\}$.
By iterating \eqref{4.3}, we conclude that for $j\in \mathbb{N}$
\begin{equation}\label{Literation}
    \mu\left(B\left(x,r/2^{j}\right)\cap\Omega\right)\leq c^j\mu(B(x,r)\cap\Omega).
\end{equation}
On the other hand, by Lemma~\ref{lem: lower measure bound}, we also have
\begin{equation}\label{Uiteration}
    \mu\left(B\left(x,r/2^j\right)\cap\Omega\right)\geq \theta (r/2^{j})^s,
\end{equation}
for all $j\in \mathbb{N}$.
From \eqref{Uiteration} and \eqref{Literation}, we conclude that
\begin{equation*}
    c^j\mu(B(x,r)\cap\Omega)\geq \theta (r/2^{j})^s,
\end{equation*}
equivalently,
\begin{equation}\label{equa}
\theta r^s\leq (2^sc)^j\mu(B(x,r)\cap\Omega).
\end{equation}
Since $2^sc<1$ and $\mu(B(x,r)\cap\Omega)<\infty$, letting $j\to\infty$ in \eqref{equa} gives $\theta r^s\leq0$,
which cannot hold. Therefore, we must have $\mu(\partial \Omega)=0$, and the proof of Theorem~\ref{thmE.6} is complete.
\end{proof}

\subsection{Proof of Theorem~\ref{thmE.6-notconnected}}

\begin{proof}
Since $\mu$ is $Q$-Ahlfors regular, $\mu$ is $Q$-doubling and satisfies the local reverse-doubling condition
\eqref{local-reverse-doubling at x} at every $x\in X$, with
$\alpha=Q$. In light of this, the remainder of the proof is identical to the proof of Theorem~\ref{thmE.6}. In particular, connectedness of $\Omega$ is no longer necessary, since it was used only to obtain the local reverse-doubling condition at boundary points; see Lemma~\ref{boundary-reverse-doubling}.
\end{proof}

\begin{remark}\label{RDspaces}
{\rm The conclusion of Theorem~\ref{thmE.6-notconnected} is still valid if  $Q$-Ahlfors regularity  is replaced by the weaker assumption that $\mu$ is $Q$-doubling and satisfies the local reverse-doubling condition \eqref{local-reverse-doubling at x} at almost every point in $\partial\Omega$, where the constants $C_{\mathrm{rd}}$, $r_{\mathrm{rd}}$, and $\alpha$ are allowed to depend on $x$. This follows from the same proof, since the local reverse-doubling condition is needed only at the boundary point selected in the argument. In particular, the conclusion of Theorem~\ref{thmE.6-notconnected} holds in RD-spaces; see, e.g., \cite[Section~1.1]{RDspaces}.}
\end{remark}





\end{document}